\pdfoutput=1
\documentclass[11pt,reqno]{amsart}
\usepackage[utf8]{inputenc}
\usepackage{amsmath,amssymb,amsthm,amsfonts}
\usepackage{mathtools}
\usepackage{bm}

\usepackage[margin=2.5cm]{geometry}

\DeclareMathOperator{\supp}{supp}
\DeclareMathOperator{\Admiss}{Admiss}
\DeclareMathOperator{\Cond}{Cond}
\DeclarePairedDelimiter{\abs}{\lvert}{\rvert}
\DeclarePairedDelimiter{\norm}{\lVert}{\rVert}
\DeclarePairedDelimiterX{\pair}[2]{\langle}{\rangle}{#1, #2}
\newcommand{\R}{\mathbb{R}}
\newcommand{\Z}{\mathbb{Z}}
\newcommand{\N}{\mathbb{N}}
\newcommand{\one}{\mathbf{1}}
\newcommand{\hp}{h^{p}}
\newcommand{\hpn}{h^p(\R^n)}
\newcommand{\prange}{0 < p \le 1}
\newcommand{\Hp}{H^{p}}
\newcommand{\Hpn}{H^p(\R^n)}
\newcommand{\Lp}{L^{p}}
\newcommand{\Lpn}{L^p(\R^n)}
\newcommand{\St}{\mathcal{S}'(\R^n)}

\theoremstyle{plain}
\newtheorem{theorem}{Theorem}[section]
\newtheorem{lemma}[theorem]{Lemma}
\newtheorem{proposition}[theorem]{Proposition}

\theoremstyle{definition}

\theoremstyle{remark}

\usepackage[shortlabels]{enumitem}
\usepackage{hyperref}

\makeatletter
\g@addto@macro\normalsize{%
  \setlength{\abovedisplayskip}{2.8pt plus 4.55pt}%
  \setlength{\belowdisplayskip}{2.8pt plus 4.55pt}%
}
\makeatother

\title[Critical molecular theory and the maximal admissible class]%
{Critical molecular theory and the maximal admissible class for Goldberg-type splittings of \texorpdfstring{$\boldsymbol{\hpn}$}{hp(Rn)}, \texorpdfstring{$\boldsymbol{\prange}$}{0 < p <= 1}}

\author{M. El Aoumari}
\address{Department of Mathematics and Statistics, Concordia University,
Montréal, Québec, Canada}
\email{mohamed.elaoumari@mail.concordia.ca}
\urladdr{https://orcid.org/0009-0003-9984-312X}

\date{\today}

\subjclass[2020]{42B30, 42B35, 46E30, 46E15}
\keywords{Local Hardy space, molecular decomposition, Goldberg's space, Carleson condition}

\begin{document}

\begin{abstract}
We identify the largest class of functions certifying membership in $h^p(\R^n)$, $0 < p \le 1$, through Goldberg's convolution splitting: a critical Hölder modulus and an $\ell^p$-Dini decay majorant, sharp on the regularity, decay, and cancellation axes. The class reaches the critical decay $\abs{x}^{-n/p}$, governed by a molecular theory extending the classical Taibleson-Weiss theory to every radius, with the resulting embedding shown sharp.
\end{abstract}

\maketitle

\section{Introduction}
\subsection{Background and goal}
Goldberg's atomic characterization of $\hpn$~\cite{goldberg1979} carries a convolution splitting: for a fixed Schwartz $\varphi$ with $\int \varphi = 1$ and vanishing moments to order $N_p$, $f \in \hpn$ if and only if $f - \varphi * f \in \Hpn$ and $\varphi * f \in \Lpn$. The question left open since Goldberg, and pursued to its end here, is which functions may witness the splitting at all: how weak may $\varphi$ be, on regularity and on decay simultaneously, for the equivalence to persist. Peloso \& Secco~\cite{pelososecco2008} extended Goldberg's splitting to $0 < p \le 1$ through local Riesz transforms, and the largest admissible class itself remained unidentified, even at $p = 1$.

We answer this in full: for $p < 1$, the splitting characterizes $\hpn$ over $\varphi \in C^\omega_\eta(\R^n) \cap \Cond_G$ exactly when the modulus $\omega$ is dominated by the critical Hölder exponent $\delta_p := n_p - \lceil n_p \rceil + 1$ and the majorant $\eta$ satisfies an $\ell^p$-Dini summability, iterated once more at integer $n_p := n(1/p - 1)$, while at $p = 1$ no regularity is required, the class being $L^\infty_\eta(\R^n) \cap \Cond_G$ under the same condition on $\eta$ (Theorem~\ref{thm:admiss-characterization}). The resulting class $\Admiss_{p}$ is maximal on all three axes at once, regularity, decay, and cancellation, each necessity witnessed by an explicit construction (Propositions~\ref{prop:linftyc-fails}--\ref{prop:cond-g-sharp}), and it reaches the critical decay $\abs{x}^{-n/p}$, at which the classical Taibleson-Weiss molecular parameter vanishes and no power majorant survives. Membership there is governed by a new molecular theory (Theorem~\ref{lem:eta-molecule}), extending the classical theory to arbitrary radii and sharp at the critical order (Proposition~\ref{prop:log-sharp}).

\subsection{Relation to existing literature}
Molecular theory on $\hp$ was initiated by Komori~\cite{komori2001} and recast by Dafni, Lau, Picon \& Vasconcelos~\cite{dafnilaupiconvasconcelos2022}, who grade the cancellation axis through approximate moment conditions, of power type and, at $p = 1$, logarithmic~\cite{dafniyue2012, dafniliflyand2019}, keeping decay of power type. The present molecules grade the opposite axis, arbitrary decay with exact moments, the two gradings meeting in parallel logarithmic corrections at the critical order. No characterization of the admissible class for a Goldberg-type splitting, necessary and sufficient on regularity, decay, and cancellation at once, has previously appeared in the literature, in any range of $p$.

\subsection{Conventions}
We fix $0 < p \le 1$ and the exponents:
\begin{equation}
\label{eq:exponents}
n_p := n\Bigl(\frac{1}{p}-1\Bigr), \quad N_p := \lfloor n_p \rfloor, \quad (N_p', \delta_p) := (\lceil n_p \rceil - 1,\, n_p - N_p'),
\end{equation}
$\delta_p \in (0,1]$, $N_p' + \delta_p = n_p$, and $N_p' \le N_p$ with equality unless $n_p \in \Z^+$. We use without comment $(s+t)^p \le s^p+t^p$ and $\ell^p \hookrightarrow \ell^1$.

\section{Preliminaries}
\label{sec:prelim}
\subsection{Hardy spaces, atoms and molecules}
\label{ssec:HP}
We use the local Hardy space $\hpn$, the real Hardy space $\Hpn$, their $L^q$-atoms, and the classical Taibleson-Weiss $(p,q,\varepsilon)$-molecule: by Goldberg's atomic characterization~\cite{goldberg1979}, an $L^q$ $\hp$-atom on $B$ satisfies $\supp \subseteq B$, $\norm{\cdot}_q \le \abs{B}^{1/q-1/p}$, and vanishing moments to order $N_p$ when $B$ has radius (or side) below $1$, none otherwise, and every such atom satisfies:
\begin{equation}
\label{eq:atom-l1}
\norm{a}_1 \le \abs{B}^{1-1/p},
\end{equation}
and there is $C_{\mathrm{at}} = C_{\mathrm{at}}(n,p,q)$ with $\norm{a}_{\hp,\max} \le C_{\mathrm{at}}$ for every such atom, on a ball or a cube:
\begin{equation}
\label{eq:atom-hp-bound}
\norm{a}_{\hp, \max} \;\le\; C_{\mathrm{at}} .
\end{equation}
A tempered distribution $f$ belongs to $\hpn$ if and only if $f = \sum_j \lambda_j a_j$ in $\mathcal S'(\R^n)$ with such atoms and $\sum_j \abs{\lambda_j}^p < \infty$~\cite{goldberg1979}, the atomic quasi-norm equivalent to $\norm{\cdot}_{\hp,\max}$. An $\Hp$-atom is defined identically but with moments required at every scale. Given a ball $B$, write $A_0(B) := 2B$ and $A_k(B) := 2^{k+1} B \setminus 2^k B$ for $k \ge 1$. Every $L^q$ $\Hp$-atom is a $(p,q,\varepsilon)$-molecule for every $\varepsilon > 0$, and conversely every such molecule lies in $\Hp$ with quasi-norm at most $C_{\mathrm{mol}}(n,p,q,\varepsilon)$~\cite{taibleson1980}.

\subsection{Hölder spaces, Taylor remainders and admissible functions}
\label{ssec:holder} For $k \in \N \cup \{0\}$ and $\delta \in (0, 1]$, the Hölder space $C^{k, \delta}(\R^n)$ consists of the $\varphi \in C^k(\R^n)$ whose norm:
\[ 
\norm{\varphi}_{C^{k, \delta}} := \sum_{\abs{\nu} \le k} \norm{\partial^\nu \varphi}_\infty + \sum_{\abs{\nu} = k} [\partial^\nu \varphi]_{\delta} ,
\] 
is finite, where $[u]_{\delta} := \sup_{x \ne y} \frac{\abs{u(x) - u(y)}}{\abs{x - y}^{\delta}}$. $C^{k, \delta}_c(\R^n)$ denotes its compactly supported elements. We use this scale in the strong sense (at integer total orders $C^{k, 1} \subsetneq \Lambda_{k + 1}$, the Lipschitz-Zygmund class~\cite[Chapter~V, Section~4]{stein1970}). With the exponents of~\eqref{eq:exponents}, the space $C^{N_p', \delta_p}(\R^n)$, of total order exactly $n_p$, absorbs every Hölder class of total order at least $n_p$: $C^{k, \delta} \subseteq C^{N_p', \delta_p}$ with absolute norm control whenever $k + \delta \ge n_p$ (then $k \ge N_p'$ automatically. Gradients bound the Lipschitz semi-norm when $k > N_p'$, and $[u]_{\delta_p} \le [u]_{\delta} + 2 \norm{u}_\infty$ by splitting at $\abs{x - y} = 1$). In particular, $N_p + 1$ bounded derivatives suffice. The admissible class below carries exactly this regularity, in weighted form.

The single property of the scale used throughout is the Hölder form of the Taylor remainder: for $\varphi \in C^{k, \delta}(\R^n)$, with $P_{x_0}$ its Taylor polynomial of degree $k$ at $x_0$:
\begin{equation}
\label{eq:taylor-holder}
\abs{\varphi(x) - P_{x_0}(x)}
\;\le\; C_{n, k}
   \Bigl( \max_{\abs{\nu} = k}
   [\partial^\nu \varphi]_{\delta} \Bigr)\,
   \abs{x - x_0}^{k + \delta},
\qquad x \in \R^n .
\end{equation}
Indeed, in Taylor's theorem with integral remainder at order $k - 1$, the integrand compares $\partial^\nu \varphi$ at two points of the segment $[x_0, x]$, at distance at most $\abs{x - x_0}$, so the bracket is at most $[\partial^\nu \varphi]_{\delta}\, \abs{x - x_0}^{\delta}$ (for $k = 0$, \eqref{eq:taylor-holder} is the definition of $[\varphi]_\delta$). Applied with $(k, \delta) = (N_p', \delta_p)$, the remainder gains $\abs{x - x_0}^{n_p}$, exactly a zero margin, sufficient for the per-atom estimates of Lemmas~\ref{lem:goldberg-extension} and~\ref{lem:conv-atom}.

A \emph{radial decreasing majorant} (henceforth \emph{majorant}) is a non-increasing function $\eta \colon [0, \infty) \to (0, 1]$, grading the decay. A \emph{modulus of continuity} (henceforth \emph{modulus}) is a non-decreasing function $\omega \colon (0, 1] \to (0, \infty)$ with $\omega(0^+) = 0$ and $\omega(t) / t$ non-increasing, grading the top order regularity. For $p < 1$ and a pair $(\omega, \eta)$, the space $C^{\omega}_{\eta}(\R^n)$ consists of the $\varphi \in C^{N_p'}(\R^n)$ whose derivatives are dominated by $\eta$ and whose top order increments by $\eta$ and $\omega$ jointly:
\begin{gather}
C^{\omega}_{\eta}(\R^n)
:= \bigl\{ \varphi \in C^{N_p'}(\R^n) :
   \norm{\varphi}_{\omega, \eta} < \infty \bigr\},
\label{eq:pair-class} \\[4pt]
\norm{\varphi}_{\omega, \eta}
:= \max_{\abs{\nu} \le N_p'}\, \sup_{x \in \R^n}\,
   \frac{\abs{\partial^\nu \varphi(x)}}{\eta(\abs{x})}
\;+\; \sup_{x \in \R^n}\, \frac{1}{\eta(\abs{x})}\,
   \max_{\abs{\nu} = N_p'}\,
   \sup_{0 < \abs{h} \le 1}
   \frac{\abs{\partial^\nu \varphi(x + h)
      - \partial^\nu \varphi(x)}}{\omega(\abs{h})} ,
\notag
\end{gather}
with $N_p'$ as in~\eqref{eq:exponents}. Its order zero companion is $L^\infty_{\eta}(\R^n) := \{ \varphi \text{ measurable} : \norm{\eta(\abs{\cdot})^{-1} \varphi}_\infty < \infty \}$, which carries no modulus. The \emph{admissible moduli} are those dominated by the Hölder modulus of exponent $\delta_p$, a pointwise condition:
\begin{equation}
\label{eq:modulus-condition}
\mathcal R_p :=
\Bigl\{ \omega \text{ modulus} :\ \displaystyle
   \sup_{0 < t \le 1} \frac{\omega(t)}{t^{\delta_p}}
   \;<\; \infty \Bigr\},
\end{equation}
and the \emph{admissible majorants} are those satisfying a summability condition over the scales, of $\ell^p$-Dini type, iterated once at integer $n_p$:
\begin{equation}
\label{eq:majorant-conditions}
\mathcal D_p :=
\begin{cases}
\Bigl\{ \eta \text{ majorant} :\ \displaystyle \int_1^\infty
   \bigl( \eta(s)\, s^{n/p} \bigr)^{p}\,
   \frac{ds}{s} \;<\; \infty \Bigr\},
   & n_p \notin \Z, \\[16pt]
\Bigl\{ \eta \text{ majorant} :\ \displaystyle \int_1^\infty
   \Bigl( \int_t^\infty \eta(s)\, s^{n/p}\,
   \frac{ds}{s} \Bigr)^{p}\, \frac{dt}{t}
   \;<\; \infty \Bigr\},
   & n_p \in \Z ,
\end{cases}
\end{equation}
the integer condition implying the fractional one, since the inner integral dominates $\eta(2t)\, (2t)^{n/p} \log 2$ by monotonicity. We write $E_\eta(t) := \int_t^\infty \eta(s)\, s^{n/p}\, \tfrac{ds}{s}$, the top order tail moment of the majorant.\\
The normalization and moment conditions are collected in:
\begin{equation}
\label{eq:cond-G}
\operatorname{Cond}_G
:= \Bigl\{ \varphi \text{ measurable} :
   \int_{\R^n} \varphi = 1, \;\;
   \int_{\R^n} x^\nu \varphi(x)\, dx = 0
   \;\text{ for }\; 0 < \abs{\nu} \le N_p \Bigr\},
\end{equation}
the moment conditions being vacuous when $N_p = 0$, the subscript recalling Goldberg, whose splitting these conditions certify. The class of \emph{admissible functions for Goldberg's splitting} is then:
\begin{equation}
\label{eq:admiss-p}
\underbrace{\operatorname{Admiss}_{p}}_{\substack{\text{functions} \\ \text{with admissible} \\ \text{regularity and decay}}}
\;:=\;
\begin{cases}
\displaystyle
\Bigl( \bigcup_{\eta \in \mathcal D_p}
   L^\infty_{\eta}(\R^n) \Bigr)
   \bigcap \operatorname{Cond}_G ,
   & p = 1, \\[16pt]
\displaystyle
\Bigl( \bigcup_{\substack{\omega \in \mathcal R_p \\ \eta \in \mathcal D_p}} C^{\omega}_{\eta}(\R^n) \Bigr)
   \bigcap \operatorname{Cond}_G
\;=\; \Bigl( \bigcup_{\eta \in \mathcal D_p}
   C^{\delta_p}_{\eta}(\R^n) \Bigr)
   \bigcap \operatorname{Cond}_G ,
   & p < 1 .
\end{cases}
\end{equation}
The equality in the second case is the collapse of the union over the moduli onto its extremal slice, $C^{\omega}_{\eta} \subseteq C^{\delta_p}_{\eta} := C^{t^{\delta_p}}_{\eta}$ with $\norm{\varphi}_{\delta_p, \eta} := \norm{\varphi}_{t^{\delta_p}, \eta} \le C_\omega \norm{\varphi}_{\omega, \eta}$ for $\omega \in \mathcal R_p$, the moduli being quantified because Theorem~\ref{thm:admiss-characterization} calibrates exactly that quantification, so that~\eqref{eq:admiss-p} is maximal among all modulus-majorant classes. All integrals in~\eqref{eq:cond-G} converge absolutely for the members, since monotonicity and~\eqref{eq:majorant-conditions} force $\eta(s) = o(s^{-n/p})$, giving a power margin at every order below $n_p$ and the finite $E_\eta(1)$ at the top order of the integer case. The class~\eqref{eq:admiss-p} contains the compactly supported functions with admissible regularity and the admissible Schwartz functions:
\[
\begin{cases}
\bigl( \mathcal S(\R^n) \cup L^\infty_c(\R^n) \bigr) \cap \operatorname{Cond}_G \;\subseteq\; \operatorname{Admiss}_{p} , & p = 1, \\[2pt]
\bigl( \mathcal S(\R^n) \cup C^{N_p', \delta_p}_c(\R^n) \bigr) \cap \operatorname{Cond}_G \;\subseteq\; \operatorname{Admiss}_{p} , & p < 1 .
\end{cases}
\]
Indeed, both are witnessed at the extremal modulus $t^{\delta_p}$ and a power majorant $\eta_M(s) := (1 + s)^{-M}$ with $M > n/p$, a compactly supported function being dominated by a majorant vanishing at any rate beyond its support. Membership at slower decay is genuinely broader: $\varphi(x) = c\, (1 + \abs{x}^2)^{-n/(2p)} \bigl( \log(e + \abs{x}^2) \bigr)^{-\rho}$, with $\rho > 1/p$ at fractional $n_p$ and $\rho > 1 + 1/p$ at integer $n_p$, corrected to $\operatorname{Cond}_G$ by Lemma~\ref{lem:moment-correction}, is admissible with logarithmic margin only, and is dominated by no $\eta_M$ with $M > n/p$.

\subsection{Molecules adapted to a majorant}
\label{ssec:molecules}
We introduce a molecular structure adapted to arbitrary decay: the power tail of the Taibleson-Weiss molecules is replaced by a majorant, so that the critical rate $\abs{x}^{-n/p}$, at which the parameter $\varepsilon$ would vanish, is no longer a boundary but a graded region, resolved by the summability condition~\eqref{eq:majorant-conditions}. Let $0 < p \le 1 < q \le \infty$ and let $\eta \in \mathcal D_p$. Given a ball $B = B(x_B, r_B)$ of any radius $r_B > 0$, write $s_k := 2^k r_B$ for the inner radius of the annulus $A_k(B)$, $k \ge 1$, and set $\varepsilon_0 := 1$ and $\varepsilon_k := \eta(s_k)\, s_k^{\,n/p}$ for $k \ge 1$. A measurable function $m$ is a \emph{$(p, q, \eta)$-molecule associated with $B$} if:
\begin{enumerate}[(i)]
\item $\norm{m}_{L^q(A_k(B))} \le \varepsilon_k\, \abs{2^k B}^{1/q - 1/p}$ for all $k \ge 0$,
\item $\int_{\R^n} m(x)\, x^\nu\, dx = 0$ for all $0 \leq \abs{\nu} \le N_p$, the integrals converging absolutely by \textup{(i)} and~\eqref{eq:majorant-conditions},
\item when $n_p \in \Z$ and $r_B < 1$, moreover $\abs[\big]{\int_{2B} m(x)\, (x - x_B)^\nu\, dx} \le \bigl( 1 + \log(1/r_B) \bigr)^{-1/p}$ for every $\abs{\nu} = n_p$.
\end{enumerate}
The shape is that of the classical molecule, $\varepsilon_k$ replacing $2^{-k\varepsilon}$. For power majorants $\eta(s) = (1 + s)^{-M}$, $M > n/p$, and $r_B \ge 1$, $\varepsilon_k \le 2^{-k(M - n/p)}$, recovering the classical $(p, q, \varepsilon)$-molecule of~\cite{taibleson1980} with $\varepsilon = M - n/p$. Clause \textup{(iii)}, an approximate cancellation condition of logarithmic type, is confined to the one regime the decay grading cannot otherwise reach, $r_B < 1$ at integer $n_p$, and is void for every molecule this paper produces. Without it, the embedding of Theorem~\ref{lem:eta-molecule} degrades to $(1 + \log(1/r_B))^{1/p}$, a growth attained by Proposition~\ref{prop:log-sharp}. Molecules at the critical decay $\abs{x}^{-n/p}$ itself, where the Taibleson-Weiss parameter vanishes, appear to be new at every $p$.

\section{Supporting lemmas}
\label{sec:lemmas}
\label{sec:molecules}

\begin{lemma}
\label{lem:moment-correction} Let $L \in \N \cup \{0\}$. For any prescribed values $m_\nu \in \R$, $\abs{\nu} \le L$, there exists $v \in C^\infty_c(\R^n)$ with $\supp(v) \subseteq \overline{B(0, 1)}$ and $\int_{\R^n} y^\nu\, v(y)\, dy = m_\nu$ for all $\abs{\nu} \le L$. The construction is linear in $(m_\nu)$, so every norm of $v$ is controlled by $\max_\nu \abs{m_\nu}$.
\end{lemma}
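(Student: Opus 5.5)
The plan is a finite-dimensional duality argument via a Gram matrix. Fix a nonnegative bump $\psi \in C^\infty_c(\R^n)$ with $\supp(\psi) \subseteq \overline{B(0,1)}$ and $\psi > 0$ on $B(0,1)$. Let $\mathcal P_L$ be the space of real polynomials of degree at most $L$, with monomial basis $\{y^\nu\}_{\abs{\nu} \le L}$ of dimension $d = \binom{n+L}{n}$. We look for $v$ of the form
\[
v(y) = \psi(y) \sum_{\abs{\mu} \le L} c_\mu\, y^\mu .
\]
Then $v \in C^\infty_c(\R^n)$ and $\supp(v) \subseteq \overline{B(0,1)}$ automatically. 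The moment conditions become the linear system
\[
\sum_{\abs{\mu} \le L} G_{\nu\mu}\, c_\mu = m_\nu, \qquad G_{\nu\mu} := \int_{\R^n} y^{\nu + \mu}\, \psi(y)\, dy,
\]
so we need to show that the $d \times d$ matrix $G$ is invertible.

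The key step is that $G$ is the Gram matrix of the monomials in $L^2(\psi\, dy)$. For $c \in \R^d$ write $Q(y) := \sum_\mu c_\mu y^\mu$. Then
\[
c^{T} G c = \int Q(y)^2\, \psi(y)\, dy .
\]
If this vanishes, $Q$ vanishes on the open ball $B(0,1)$, where $\psi > 0$. A polynomial vanishing on a nonempty open set is identically zero, for example because all its derivatives at the origin vanish, so $c = 0$. Hence $G$ is symmetric positive definite, in particular invertible, and $c = G^{-1} m$.

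Linearity and norm control then follow directly. The map $m \mapsto c = G^{-1} m$ is linear, and $c \mapsto v$ is linear into $C^\infty_c$. For any norm $\norm{\cdot}_X$ finite on $C^\infty_c$ functions (sup norms of derivatives, $C^{k,\delta}$ norms, $L^q$ norms, weighted norms $\norm{\cdot}_{\omega,\eta}$ since $\eta>0$ is bounded below on $[0,1]$), we get
\[
\norm{v}_X \le \sum_\mu \abs{c_\mu}\, \norm{\psi y^\mu}_X \le C(n, L, \psi, X)\, \max_\nu \abs{m_\nu} .
\]
Here the constant involves $\norm{G^{-1}}$, which depends only on $n$, $L$ and the fixed $\psi$.

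There is no serious obstacle. The only point needing care is the positive definiteness of $G$, that is, the fact that a nonzero polynomial cannot vanish on an open ball. Also, the phrase ``every norm'' should be read as every norm finite on the fixed finite-dimensional space spanned by $\psi y^\mu$, where all norms are equivalent.
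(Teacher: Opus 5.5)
Your proposal is correct and follows the same approach as the paper: a bump times a polynomial of degree at most $L$, the moment conditions written as a linear system with the Gram matrix of the monomials in $L^2(\psi\, dy)$, and positive definiteness because a nonzero polynomial cannot vanish on an open set. The paper only assumes $\chi \ge 0$, $\chi \not\equiv 0$, where you take $\psi > 0$ on all of $B(0,1)$; this is immaterial, and your reading of ``every norm'' as every norm finite on the fixed span of the $\psi\, y^\mu$ is the right one.
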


\begin{proof}
Fix $\chi \in C^\infty_c(\overline{B(0, 1)})$, $\chi \ge 0$, $\chi \not\equiv 0$, and seek $v = \sum_{\abs{\mu} \le L} c_\mu\, y^\mu \chi$. The moment conditions form the linear system $G c = m$ with $G_{\nu, \mu} = \int y^{\nu + \mu} \chi\, dy$, the Gram matrix of the monomials in $L^2(\chi\, dy)$: it is positive definite, since $c^{\mathsf T} G c = \int \bigl( \sum_\mu c_\mu y^\mu \bigr)^2 \chi\, dy > 0$ for $c \ne 0$, a non-zero polynomial vanishing only on a null set while $\chi$ is positive on an open set. Hence $c = G^{-1} m$.
\end{proof}

\begin{lemma}
\label{lem:conv-atom} Let $0 < p \le 1 < q \le \infty$, let $\eta \in \mathcal D_p$, and let $\varphi \in C^{\delta_p}_{\eta}(\R^n)$ (for $p = 1$, $\varphi \in L^\infty_{\eta}(\R^n)$), with $\norm{\varphi}_{\delta_p, \eta}$ the corresponding norm. In particular, every $\varphi \in \operatorname{Admiss}_{p}$ qualifies. Write $\eta_{1/2}(t) := \eta(t/2)$. Let $a$ be an $L^q$ $\hp$-atom associated with the ball $B = B(x_a, r_a)$. Then:
\begin{enumerate}[label=\textup{(\roman*)}, wide, leftmargin=0pt]
\item The integral $(\varphi * a)(x) = \int_B \varphi(x - y)\, a(y)\, dy$ converges absolutely, uniformly in $x \in \R^n$, and $\varphi * a$ is bounded and continuous, with $\norm{\varphi * a}_\infty \le \norm{\varphi}_{q'} \norm{a}_q$ and $\norm{\varphi}_{q'} \le C(n, p, \eta)\, \norm{\varphi}_{\delta_p, \eta}$, where $q'$ is the Hölder conjugate of $q$. For $p < 1$, moreover $\varphi * a \in C^{N_p'}(\R^n)$ with $\partial^\nu (\varphi * a) = (\partial^\nu \varphi) * a$ for $\abs{\nu} \le N_p'$. If $\supp(\varphi) \subseteq B(0, r_\varphi)$, then $\supp(\varphi * a) \subseteq B(x_a,\, r_a + r_\varphi)$.
\item If $r_a \ge 1$, then for $x \notin 2B$:
\[
  \abs{(\varphi * a)(x)}
  \;\le\; \norm{\varphi}_{\delta_p, \eta}\,
     \abs{B}^{1 - 1/p}\, \eta_{1/2}(\abs{x - x_a}) .
\]
If $r_a < 1$, then for all $x \in \R^n$:
\[
  \abs{(\varphi * a)(x)}
  \;\le\; C(n, p, \eta)\, \norm{\varphi}_{\delta_p, \eta}\,
     \eta_{1/2}(\abs{x - x_a}) .
\]
Both bounds are independent of $q$.
\item There is $C = C(n, p, \eta, \norm{\varphi}_{\delta_p, \eta})$, independent of $a$, of $B$, and of $q$, with $\norm{\varphi * a}_p \le C$.
\end{enumerate}
\end{lemma}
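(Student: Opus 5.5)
The proof treats the three parts in order. Parts (i) and (ii) are direct size estimates. Part (iii) rests on a cancellation argument through the Taylor remainder~\eqref{eq:taylor-holder} on the far region, combined with the summability~\eqref{eq:majorant-conditions}.

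\emph{Part (i).} First we bound $\norm{\varphi}_{q'}$. For $q'<\infty$ we have $\abs{\varphi}\le\norm{\varphi}_{\delta_p,\eta}\,\eta(\abs{x})$. Since $\eta\le 1$ and $\eta(s)=o(s^{-n/p})$ (noted after~\eqref{eq:cond-G}), with $n/p\ge n$, we get $\eta\in L^1\cap L^\infty(\R^n)$. Hence $\eta(\abs{\cdot})\in L^{q'}$, and Hölder's inequality gives absolute and uniform convergence together with $\norm{\varphi*a}_\infty\le\norm{\varphi}_{q'}\norm{a}_q$. Continuity follows from continuity of translation in $L^{q'}$ when $q'<\infty$. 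When $q=\infty$ we have $q'=1$, and continuity of translation in $L^1$ applied to $\varphi$ gives the same conclusion. For $p<1$, each $\partial^\nu\varphi$ with $\abs{\nu}\le N_p'$ is also dominated by $\eta$ and is continuous. Differentiation under the integral sign is justified by dominated convergence: the difference quotients of $\partial^\nu\varphi$ are integrable against $a$, since $a$ is compactly supported and the integrand is locally bounded. The support statement is immediate.

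\emph{Part (ii).} For $r_a\ge1$ and $x\notin 2B$, every $y\in B$ satisfies $\abs{x-y}\ge\abs{x-x_a}/2$. Monotonicity then gives $\abs{\varphi(x-y)}\le\norm{\varphi}_{\delta_p,\eta}\,\eta_{1/2}(\abs{x-x_a})$, and integrating against $\abs{a}$ and using~\eqref{eq:atom-l1} yields the first bound. For $r_a<1$ we split into two cases.
\begin{itemize}
\item When $\abs{x-x_a}\ge 2$, the same argument applies; moreover $\abs{B}^{1-1/p}\le C$ is false for small balls, so we must use the moments instead. Expand $\varphi(x-\cdot)$ to Taylor degree $N_p'$ at $x_a$ and apply~\eqref{eq:taylor-holder}. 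The weighted Hölder norm, localized with $\abs{h}\le1$ along the segment, whose points all lie at distance $\ge\abs{x-x_a}/2$ from $0$, bounds the remainder by $\norm{\varphi}\,\eta_{1/2}(\abs{x-x_a})\,\abs{y-x_a}^{n_p}$. Integrating against $\abs{a}$ gives $r_a^{n_p}\abs{B}^{1-1/p}\lesssim 1$, since $n_p=n(1/p-1)$. The case $p=1$ needs no cancellation, because $\abs{B}^{0}=1$.
\item When $\abs{x-x_a}<2$, we have $\eta_{1/2}(\abs{x-x_a})\ge\eta(1)>0$. The bound then follows from the $L^\infty$ bound of (i) together with the moment estimate above. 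Concretely, we use the same Taylor argument and bound the remainder by $\norm{\varphi}\cdot 1$, so that the constant absorbs $1/\eta(1)$.
\end{itemize}

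\emph{Part (iii).} For $r_a\ge1$ we split the integral. On $2B$ we use Hölder's inequality with (i): $\norm{\varphi*a}_{L^p(2B)}\le\abs{2B}^{1/p}\norm{\varphi*a}_\infty$. This is bounded, though not uniformly in $r_a$ via the $L^\infty$ estimate alone. A better route is $\norm{\varphi*a}_{L^q}\le\norm{\varphi}_1\norm{a}_q$ followed by Hölder's inequality from $L^q$ to $L^p$ on $2B$, which gives $\lesssim\norm{\varphi}_1$. Off $2B$, part (ii) gives
\[
\int_{(2B)^c}\abs{\varphi*a}^p\lesssim \abs{B}^{p-1}\int_{2r_a}^\infty \eta(s/2)^p s^{n-1}\,ds .
\]
Substituting $s=2u$, this is controlled by $\abs{B}^{p-1}\int_{r_a}^\infty(\eta(u)u^{n/p})^p u^{\,n(1-1/p)}\,\frac{du}{u}\cdot u^{0}$. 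The factor $u^{n(1-1/p)}\le r_a^{\,n(1-1/p)}$ cancels $\abs{B}^{p-1}$, leaving a multiple of the Dini integral~\eqref{eq:majorant-conditions}. For $r_a<1$, (ii) gives $\int\eta(\abs{x-x_a}/2)^p\,dx$, which is finite by the same Dini condition near infinity and by $\eta\le1$ near the origin.

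The main obstacle is the far-field Taylor step for small atoms. There the modulus is controlled only for $\abs{h}\le 1$ and only relative to $\eta$ at the base point, so the segment must be kept inside the region where $\eta$ is comparable to $\eta_{1/2}(\abs{x-x_a})$. This is where $r_a<1\le\abs{x-x_a}/2$ is used. The zero margin of $\abs{y-x_a}^{n_p}$ is exactly absorbed by $\abs{B}^{1-1/p}$.
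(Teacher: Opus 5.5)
Your route is the paper's. For large atoms you use separation together with $\norm{a}_1\le\abs{B}^{1-1/p}$. For small atoms the moments kill the weighted Taylor remainder of degree $N_p'$, $r_a^{n_p}$ cancels $\norm{a}_1$, and $\eta(1)^{-1}$ absorbs the region $\abs{x-x_a}<2$. For (iii) you use Hölder plus Young on $2B$ and integrate the envelope of (ii) elsewhere. Two steps, however, are wrong as written and must be repaired.

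First, at $p=1$ the decay $\eta(s)=o(s^{-n/p})=o(s^{-n})$ leaves no margin. It does not give $\eta(\abs{\cdot})\in L^1(\R^n)$: a majorant behaving like $s^{-n}(\log s)^{-1}$ at infinity is $o(s^{-n})$ but not integrable. Yet you need exactly this integrability for $\norm{\varphi}_{q'}$ at $q=\infty$, and for $\norm{\varphi}_1$ on $2B$ in (iii). The missing input is the iterated condition. Since $n_1=0\in\Z$, membership in $\mathcal D_1$ forces $E_\eta(1)=\int_1^\infty\eta(s)\,s^{n-1}\,ds<\infty$, which is how the paper handles this borderline. Alternatively, and uniformly in $p$: $\eta\le1$ gives $\eta^r\le\eta^p$ for $r\ge p$. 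Moreover $\int_{\R^n}\eta(\abs{x})^p\,dx$ is finite by the fractional integral in \eqref{eq:majorant-conditions}, which the integer one implies. Hence $\eta(\abs{\cdot})\in L^r$ for every $r\in[p,\infty]$.

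Second, your substitution off $2B$ in (iii) is mis-computed. In fact $(\eta(u)\,u^{n/p})^p\,u^{-1}=\eta(u)^p\,u^{n-1}$ exactly, so no factor $u^{n(1-1/p)}$ arises; you dropped the power $p$ on $u^{n/p}$. The asserted cancellation fails anyway, since $r_a^{n(1-1/p)}=r_a^{-n_p}$ whereas $\abs{B}^{p-1}=\abs{B(0,1)}^{p-1}r_a^{-pn_p}$. No cancellation is needed: $\abs{B}^{p-1}\le\abs{B(0,1)}^{p-1}$ because $r_a\ge1$ and $p\le1$, and $\int_{r_a}^\infty\le\int_1^\infty$ is the Dini integral itself. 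With these two corrections your argument is complete.
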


\begin{proof}
The majorant bound gives $\abs{\varphi(x)} \le \norm{\varphi}_{\delta_p, \eta}\, \eta(\abs{x})$ pointwise. The majorant lies in $L^{q'}$ for every $1 \le q' \le \infty$: trivially at $q' = \infty$ since $\eta \le 1$, by $\eta(s) = o(s^{-n/p})$ with the margin $q' / p > 1$ when $1 < q' < \infty$ or when $q' = 1 > p$, and at the borderline $p = 1$, $q' = 1$ by the finite top order tail moment $E_\eta(1)$ of the integer condition in~\eqref{eq:majorant-conditions}. Hence:
\begin{equation}
\label{eq:phi-lq}
  \norm{\varphi}_{q'}
  \;\le\; C(n, p, \eta)\, \norm{\varphi}_{\delta_p, \eta} .
\end{equation}
Every $L^q$ $\hp$-atom satisfies~\eqref{eq:atom-l1}, and the separation estimate~\eqref{eq:sep-majorant} holds in both its cases, off the doubled ball and, for $r_a \le 1$, globally up to the constant $\eta(1)^{-1}$.

For (i), Absolute and uniform convergence follow from $\int_B \abs{\varphi(x - y)}\, \abs{a(y)}\, dy \le \norm{\varphi}_{q'} \norm{a}_q$, finite and independent of $x$ by~\eqref{eq:phi-lq}, and this is also the $L^\infty$ bound. Continuity follows from continuity of translation in $L^1$ applied to $a$. For $p < 1$ and $\abs{\nu} \le N_p'$, the derivative $\partial^\nu \varphi$ is bounded by $\norm{\varphi}_{\delta_p, \eta}$, so differentiation under the integral sign is allowed by the Dominated Convergence Theorem, giving $\partial^\nu (\varphi * a) = (\partial^\nu \varphi) * a$, bounded and continuous by the same argument. The support statement is immediate.

For (ii), large atoms, $r_a \ge 1$, no moment conditions used: For $x \notin 2B$, by~\eqref{eq:sep-majorant} on $B$ and~\eqref{eq:atom-l1}:
\[
  \abs{(\varphi * a)(x)}
  \;\le\; \norm{a}_1
     \sup_{y \in B} \abs{\varphi(x - y)}
  \;\le\; \norm{\varphi}_{\delta_p, \eta}\,
     \abs{B}^{1 - 1/p}\, \eta_{1/2}(\abs{x - x_a}) .
\]

For (ii), small atoms, $r_a < 1$: For $p = 1$, directly, with no regularity: both cases of~\eqref{eq:sep-majorant} and $\norm{a}_1 \le 1$ give the envelope. For $p < 1$, let $P_x$ be the Taylor polynomial of degree $N_p'$ of $y \mapsto \varphi(x - y)$ at $y = x_a$. The moments of $a$, of order up to $N_p \ge N_p'$, annihilate it, so $(\varphi * a)(x) = \int_B [\varphi(x - y) - P_x(y)]\, a(y)\, dy$, and the Hölder form of the Taylor remainder~\eqref{eq:taylor-holder} at $(k, \delta) = (N_p', \delta_p)$, its top order derivatives controlled at the points $x - z$, $z \in [x_a, y] \subseteq B$, by the semi-norm of~\eqref{eq:pair-class} at the extremal modulus and both cases of~\eqref{eq:sep-majorant}:
\[
  \abs{\varphi(x - y) - P_x(y)}
  \;\le\; C\, \norm{\varphi}_{\delta_p, \eta}\,
     \eta_{1/2}(\abs{x - x_a})\, r_a^{\,n_p} .
\]
Multiplying by $\norm{a}_1 \le \abs{B}^{1 - 1/p} = \omega_n^{1 - 1/p}\, r_a^{-n_p}$ from~\eqref{eq:atom-l1}, the powers of $r_a$ cancel identically and the envelope follows. Both bounds consume $q$ nowhere.

For (iii): For small atoms the global envelope of (ii) integrates directly:
\[
  \int_{\R^n} \abs{\varphi * a}^p\, dx
  \;\le\; C^p\, \norm{\varphi}_{\delta_p, \eta}^p
     \Bigl( C_n + C_n \int_1^\infty
     \bigl( \eta(s)\, s^{n/p} \bigr)^p\,
     \frac{ds}{s} \Bigr)
  \;=\; C' ,
\]
after the substitution $s = \abs{x} / 2$ in polar coordinates, the head bounded by $\eta \le 1$. The fractional condition of~\eqref{eq:majorant-conditions} is thus consumed exactly, and is available at both parities. For large atoms, on the doubled ball, for $q < \infty$, Hölder's Inequality with exponent $q / p > 1$, Young's Inequality, and the size condition give:
\[
  \int_{2B} \abs{\varphi * a}^p
  \;\le\; \abs{2B}^{1 - p/q}
     \bigl( \norm{\varphi}_1 \norm{a}_q \bigr)^p
  \;\le\; 2^n\, \norm{\varphi}_1^p ,
\]
the powers of $\abs{B}$ cancelling, and for $q = \infty$ the same conclusion via $\norm{\varphi * a}_\infty \le \norm{\varphi}_1 \abs{B}^{-1/p}$. On the tail, by (ii) and the same substitution:
\[
  \int_{(2B)^c} \abs{\varphi * a}^p
  \;\le\; \norm{\varphi}_{\delta_p, \eta}^p\,
     \abs{B}^{p - 1}\,
     C_n \int_{r_a}^\infty
     \bigl( \eta(s)\, s^{n/p} \bigr)^p\, \frac{ds}{s}
  \;\le\; C\, \norm{\varphi}_{\delta_p, \eta}^p ,
\]
since $\abs{B}^{p - 1} = \omega_n^{p - 1}\, r_a^{-n_p p} \le 1$ for $r_a \ge 1$. Combining, and using~\eqref{eq:phi-lq} to absorb $\norm{\varphi}_1$, proves (iii).

\noindent\textit{(Only the regularity and decay of $\varphi$ enter, the former only for $p < 1$ and $r_a < 1$.)}
\end{proof}

\begin{lemma}
\label{lem:termwise-convolution} Let $0 < p \le 1 < q \le \infty$ and let $f = \sum_i \mu_i a_i$, with convergence in $\St$, where each $a_i$ is an $L^q$ $\hp$-atom on the ball $B_i = B(x_i, r_i)$ and $\sum_i \abs{\mu_i}^p < \infty$. Let $\eta \in \mathcal D_p$ and $\varphi \in C^{\delta_p}_{\eta}(\R^n)$ (for $p = 1$, $\varphi \in L^\infty_{\eta}(\R^n)$). In particular any $\varphi \in \operatorname{Admiss}_{p}$ qualifies. Then the series $h := \sum_i \mu_i\, (\varphi * a_i)$ converges absolutely and uniformly on $\R^n$, and in $\St$. Its sum is a bounded continuous function, with $\displaystyle \norm{h}_\infty \le C(n, p, \eta, \norm{\varphi}_{\delta_p, \eta})\, \bigl( \sum_i \abs{\mu_i}^p \bigr)^{1/p}$, and is independent of the chosen atomic decomposition of $f$. Accordingly, we \emph{define} $\varphi * f := h$. When $\varphi \in \mathcal S(\R^n)$, $h$ coincides with the usual convolution of $\mathcal S$ with $\St$, given by $(\varphi * f)(x) = \pair{f}{\tilde\varphi(\cdot - x)}$, $\tilde\varphi(y) := \overline{\varphi(-y)}$. The conditions of $\operatorname{Cond}_G$ play no role in this lemma.
\end{lemma}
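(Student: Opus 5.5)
The series converges uniformly by Lemma~\ref{lem:conv-atom}(i) combined with a size bound on each term. For each atom, Lemma~\ref{lem:conv-atom}(i) gives $\norm{\varphi * a_i}_\infty \le \norm{\varphi}_{q'} \norm{a_i}_q \le C\norm{\varphi}_{\delta_p,\eta}\abs{B_i}^{1/q-1/p}$. This is uniformly bounded only when $r_i \gtrsim 1$, so the small atoms need a different bound. For $r_i<1$, Lemma~\ref{lem:conv-atom}(ii) gives $\abs{\varphi * a_i} \le C\norm{\varphi}_{\delta_p,\eta}\,\eta_{1/2} \le C\norm{\varphi}_{\delta_p,\eta}$, since $\eta\le 1$. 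For $r_i \ge 1$, we instead use $\norm{\varphi*a_i}_\infty \le \norm{\varphi}_\infty\norm{a_i}_1 \le \norm{\varphi}_{\delta_p,\eta}\abs{B_i}^{1-1/p} \le C\norm{\varphi}_{\delta_p,\eta}$, by \eqref{eq:atom-l1}. Hence $\sup_i\norm{\varphi*a_i}_\infty \le C_0$ with $C_0 = C(n,p,\eta,\norm{\varphi}_{\delta_p,\eta})$. Then
\[
\sum_i\abs{\mu_i}\norm{\varphi*a_i}_\infty \le C_0\sum_i\abs{\mu_i} \le C_0\Bigl(\sum_i\abs{\mu_i}^p\Bigr)^{1/p},
\]
using $\ell^p\hookrightarrow\ell^1$. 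So the Weierstrass M-test gives absolute and uniform convergence to a bounded continuous $h$ with the stated bound. Convergence in $\St$ follows because uniform convergence of bounded functions implies convergence against every $\psi\in\mathcal S$ by dominated convergence, with majorant $C_0\sum\abs{\mu_i}\,\abs{\psi}\in L^1$.

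Independence of the decomposition is the main obstacle: a second decomposition $f=\sum_j\mu'_j a'_j$ must give the same $h$. I would reduce to the Schwartz case by approximation. Fix $\psi\in C^\infty_c$, $\psi\ge0$, $\int\psi=1$, and set $\psi_\epsilon=\epsilon^{-n}\psi(\cdot/\epsilon)$ and $\varphi_\epsilon := \varphi*\psi_\epsilon \cdot \chi(\epsilon\,\cdot)$, with $\chi$ a smooth cutoff equal to $1$ on $B(0,1)$. Each $\varphi_\epsilon$ is Schwartz. For Schwartz $\varphi_\epsilon$, $\sum_i\mu_i(\varphi_\epsilon*a_i)(x)=\sum_i\mu_i\pair{a_i}{\tilde\varphi_\epsilon(\cdot-x)}=\pair{f}{\tilde\varphi_\epsilon(\cdot-x)}$, by convergence in $\St$. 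This is intrinsic to $f$, and it also proves the final claim that $h$ agrees with the usual convolution when $\varphi\in\mathcal S$ (apply it directly with $\varphi$). It then suffices to show $\sum_i\mu_i(\varphi_\epsilon*a_i)(x)\to\sum_i\mu_i(\varphi*a_i)(x)$ pointwise.

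By dominated convergence in $i$, with dominating sequence $C_0'\abs{\mu_i}$, this needs two facts. First, a uniform-in-$\epsilon$ bound $\sup_i\norm{\varphi_\epsilon*a_i}_\infty\le C_0'$. Second, termwise convergence $(\varphi_\epsilon*a_i)(x)\to(\varphi*a_i)(x)$. Termwise convergence holds because $\varphi_\epsilon\to\varphi$ in $L^{q'}$ for $q'<\infty$, since $\varphi\in L^{q'}$ by \eqref{eq:phi-lq}. At $q=1$, i.e.\ $q'=\infty$, I would avoid the issue by noting that $L^q$ atoms are also $L^{\tilde q}$ atoms for some $\tilde q<q$ up to the constant $\abs{B}^{1/\tilde q - 1/q}$ scaling, which is harmless. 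Since $q>1$ in the statement, $q'<\infty$ anyway.

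The uniform bound needs care only because the small-atom estimate uses regularity and decay through $\norm{\cdot}_{\delta_p,\eta}$. I would check that $\varphi_\epsilon\in C^{\delta_p}_{\tilde\eta}$ with $\tilde\eta(s)=\eta((s-1)_+)$, which still lies in $\mathcal D_p$, and with norm $\lesssim\norm{\varphi}_{\delta_p,\eta}$ uniformly for $\epsilon\le1$. Mollification preserves the Hölder seminorms and the derivative bounds, and shifts the majorant by at most $\epsilon$. The cutoff $\chi(\epsilon\,\cdot)$ contributes derivatives $O(\epsilon^{|\beta|})$, which are harmless. If the cutoff's interaction with the top-order increments proves awkward, I would drop the cutoff and use $\varphi*\psi_\epsilon$ directly. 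That function is $C^\infty$ with all derivatives dominated by $\tilde\eta$, which is enough for the pairing identity $\pair{f}{\tilde\varphi_\epsilon(\cdot-x)}$ extended from $\mathcal S$ by density of atomic sums in $\hpn$. Lemma~\ref{lem:conv-atom}(ii), applied with $(\tilde\eta,\varphi_\epsilon)$, then gives the uniform bound, and dominated convergence gives $h(x)=\lim_\epsilon\pair{f}{\tilde\varphi_\epsilon(\cdot-x)}$, which is independent of the decomposition. No moment condition of $\varphi$ enters anywhere, consistent with the final remark of the statement.
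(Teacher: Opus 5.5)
Your proof is correct. The first half (uniform bound over atoms, absolute and uniform convergence, $\mathcal S'$-convergence, and coherence for Schwartz $\varphi$) matches the paper. For independence of the decomposition you take a genuinely different route.

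The paper only truncates, $\varphi^{(R)} := \varphi\,\chi(\cdot/R)$, and leaves $\varphi^{(R)}$ non-smooth. It proves independence distributionally, through $\int h_R\,\overline{\zeta} = \pair{f}{\tilde\varphi^{(R)} * \zeta}$ for $\zeta \in \mathcal S$, which uses that $\tilde\varphi^{(R)} * \zeta \in \mathcal S$. It then lets $R \to \infty$ \emph{uniformly}, through the tail estimate $\sup_a \norm{\psi^{(R)} * a}_\infty \lesssim \eta((R-1)/2)$.

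You mollify as well as truncate, so $\varphi_\epsilon \in C^\infty_c$. Each point value $\sum_i \mu_i (\varphi_\epsilon * a_i)(x)$ is then literally $\pair{f}{\tilde\varphi_\epsilon(\cdot - x)}$. Your limit $\epsilon \to 0$ is qualitative: dominated convergence in $i$, driven by termwise convergence from $\varphi_\epsilon \to \varphi$ in $L^{q'}$ (available since $q' < \infty$ and~\eqref{eq:phi-lq}), and by uniform domination from Lemma~\ref{lem:conv-atom}\textup{(ii)} applied to $\varphi_\epsilon$ with a shifted majorant. That domination holds because averaging the pointwise and increment bounds of $\varphi$ over $\abs{y} \le \epsilon$ keeps $\varphi * \psi_\epsilon$ in the weighted Hölder class with norm control uniform in $\epsilon \le 1$.

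What your route buys is the explicit intrinsic formula $h(x) = \lim_{\epsilon \to 0} \pair{f}{\tilde\varphi_\epsilon(\cdot - x)}$. It absorbs the Schwartz-coherence step and needs neither testing against $\zeta$ nor a tail estimate. What it costs is the mollification check, and you lose the uniform convergence rate in $R$ that the paper obtains.

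Three small corrections:
\begin{enumerate}
\item The cutoff is not awkward. The Leibniz rule with factors $\epsilon^{\abs{\beta}} \le 1$ handles it exactly as in the paper's truncation. The only cost is that the lower-order increments and the base point $x + h$ shift the majorant by about $2$ rather than $1$, and it still lies in $\mathcal D_p$ by comparison with $\eta_{1/2}$.
\item Discard your fallback of dropping the cutoff. Without it, $\varphi * \psi_\epsilon \notin \mathcal S$, so $\pair{f}{\tilde\varphi_\epsilon(\cdot - x)}$ is undefined for general $f \in \St$. ``Extending the pairing by density of atomic sums'' presupposes the very decomposition-independence you are proving.
\item Your aside about $q' = \infty$ is vacuous, since $q > 1$. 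At $q = \infty$ you need $\varphi \in L^1$, which~\eqref{eq:phi-lq} supplies, including at $p = 1$ via $E_\eta(1) < \infty$.
\end{enumerate}
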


\begin{proof}
Throughout, $f_N := \sum_{i \le N} \mu_i a_i \to f$ in $\St$, and $\sum_i \abs{\mu_i} \le \bigl( \sum_i \abs{\mu_i}^p \bigr)^{1/p}$ since $p \le 1$. Note also that $\eta$ vanishes at infinity, for otherwise $\eta(s)\, s^{n/p}$ would be bounded below by a multiple of $s^{n/p}$ and~\eqref{eq:majorant-conditions} would fail.

By Lemma~\ref{lem:conv-atom}, for large atoms ($r_i \ge 1$), we have $\norm{\varphi * a_i}_\infty \le \norm{\varphi}_\infty \norm{a_i}_1 \le \norm{\varphi}_{\delta_p, \eta}$ by~\eqref{eq:atom-l1}, and for small atoms ($r_i < 1$) the global envelope of Lemma~\ref{lem:conv-atom}\textup{(ii)} with $\eta_{1/2} \le 1$ gives $\norm{\varphi * a_i}_\infty \le C(n, p, \eta)\, \norm{\varphi}_{\delta_p, \eta}$. Hence:
\begin{equation}
\label{eq:unif-Linfty}
  \sup_i\, \norm{\varphi * a_i}_\infty
  \;\le\; C_\varphi := C(n, p, \eta)\,
     \norm{\varphi}_{\delta_p, \eta} ,
\end{equation}
and, by $\ell^p \hookrightarrow \ell^1$, the series defining $h$ converges absolutely and uniformly on $\R^n$, with $\norm{h}_\infty \le C_\varphi \sum_i \abs{\mu_i}$. Each summand is continuous by Lemma~\ref{lem:conv-atom}\textup{(i)}, so $h$ is continuous. For $\zeta \in \mathcal S \subseteq L^1$, uniform convergence gives $\int \bigl( h - \sum_{i \le N} \mu_i\, \varphi * a_i \bigr)\, \overline{\zeta} \to 0$, that is, convergence in $\St$.

Introduce a truncation: Fix $\chi \in C^\infty_c(\R^n)$ with $\chi \equiv 1$ on $B(0, 1)$ and $\supp(\chi) \subseteq B(0, 2)$, and set, for $R \ge 3$:
\[
  \varphi^{(R)} := \varphi\, \chi(\cdot / R),
  \qquad
  \psi^{(R)} := \varphi - \varphi^{(R)}
  = \varphi\, \bigl( 1 - \chi(\cdot / R) \bigr),
\]
so that $\varphi^{(R)} \in L^\infty_c(\R^n)$ and $\supp(\psi^{(R)}) \subseteq \{ \abs{x} \ge R \}$. By the Leibniz rule, the derivatives of $\chi(\cdot / R)$ carrying factors $R^{-\abs{\beta}} \le 1$, both truncates lie in $C^{\delta_p}_{\eta}$ with $\norm{\varphi^{(R)}}_{\delta_p, \eta} + \norm{\psi^{(R)}}_{\delta_p, \eta} \le C_\chi\, \norm{\varphi}_{\delta_p, \eta}$, uniformly in $R$ (for $p = 1$, the same for the $L^\infty_\eta$ norms). The tail piece is controlled by its support alone. For any $L^q$ $\hp$-atom $a$ on $B(x_a, r_a)$, the convolution $\psi^{(R)} * a$ vanishes on $\abs{x - x_a} < R - 1$, since there $\abs{x - y} < R$ for every $y \in B(x_a, r_a) \subseteq B(x_a, \max(r_a, 1))$ when $r_a < 1$, and for $r_a \ge 1$ directly $\norm{\psi^{(R)} * a}_\infty \le \norm{\psi^{(R)}}_\infty \norm{a}_1 \le \norm{\varphi}_{\delta_p, \eta}\, \eta(R)$ by~\eqref{eq:atom-l1} and the support of $\psi^{(R)}$. On $\abs{x - x_a} \ge R - 1 \ge 2$, the envelopes of Lemma~\ref{lem:conv-atom}\textup{(ii)}, applied to $\psi^{(R)}$, give:
\[\abs{(\psi^{(R)} * a)(x)} \le C(n, p, \eta)\, C_\chi\, \norm{\varphi}_{\delta_p, \eta}\, \eta_{1/2}(\abs{x - x_a}).
\]
Hence:
\begin{equation}
\label{eq:truncation-tail}
  \sup_a\, \norm{\psi^{(R)} * a}_\infty
  \;\le\; C(n, p, \eta, \chi)\, \norm{\varphi}_{\delta_p, \eta}\,
     \eta\Bigl( \frac{R - 1}{2} \Bigr)
  \;\xrightarrow[R \to \infty]{}\; 0 ,
\end{equation}
the supremum over all $L^q$ $\hp$-atoms, and the limit by the vanishing of $\eta$ at infinity.

At fixed truncation, the sum is decomposition-independent: fix $R$ and set $h_R := \sum_i \mu_i\, (\varphi^{(R)} * a_i)$, convergent absolutely and uniformly by~\eqref{eq:unif-Linfty} applied to $\varphi^{(R)}$, whose norm $\norm{\varphi^{(R)}}_{\delta_p, \eta}$ is bounded uniformly in $R$ by the truncation estimates. For $\zeta \in \mathcal S(\R^n)$, one has $\tilde\varphi^{(R)} * \zeta \in \mathcal S(\R^n)$. It is smooth, with $\partial^\nu (\tilde\varphi^{(R)} * \zeta) = \tilde\varphi^{(R)} * \partial^\nu \zeta$, and rapidly decaying, since $\supp(\tilde\varphi^{(R)}) \subseteq \overline{B(0, 2R)}$ gives $(1 + \abs{x - y})^{-L} \le (1 + 2R)^{L} (1 + \abs{x})^{-L}$ for $\abs{y} \le 2R$. By Fubini's Theorem on each finite partial sum and $\St$-convergence of $f_N \to f$ against this fixed test function:
\[
  \int_{\R^n} h_R\, \overline{\zeta}\; dx
  \;=\; \lim_{N} \sum_{i \le N} \mu_i
     \int a_i\, \overline{ (\tilde\varphi^{(R)} * \zeta)
     }\; dx
  \;=\; \pair[\big]{f}{\tilde\varphi^{(R)} * \zeta} ,
\]
the left limit by uniform convergence against $\zeta \in L^1$. The right-hand side does not involve the decomposition. Hence, for two decompositions of the same $f$, the resulting bounded continuous functions $h_R$ and $h_R'$ agree as tempered distributions, so $h_R = h_R'$.

The independence passes to the limit: By~\eqref{eq:truncation-tail}:
\[
  \norm{h - h_R}_\infty
  \;\le\; \Bigl( \sum_i \abs{\mu_i} \Bigr)\,
     \sup_i\, \norm{\psi^{(R)} * a_i}_\infty
  \;\xrightarrow[R \to \infty]{}\; 0 ,
\]
so $h = \lim_R h_R$ uniformly, and $h$ inherits the decomposition-independence of the $h_R$. The definition $\varphi * f := h$ is therefore unambiguous.

Coherence for Schwartz $\varphi$: If $\varphi \in \mathcal S(\R^n)$, testing the $\St$-convergence $f_N \to f$ against $\tilde\varphi(\cdot - x) \in \mathcal S$ gives, for every $x$, $\sum_{i \le N} \mu_i (\varphi * a_i)(x) = \pair{f_N}{\tilde\varphi(\cdot - x)} \to \pair{f}{\tilde\varphi(\cdot - x)} = (\varphi * f)(x)$. The left side converges to $h(x)$ by the uniform convergence established at the start of the proof, so $h = \varphi * f$ pointwise.
\end{proof}

\begin{lemma}
\label{lem:goldberg-extension} Let $0 < p \le 1 < q \le \infty$ and $\varphi \in \operatorname{Admiss}_{p}$, with membership witnessed by $\eta \in \mathcal D_p$ and $\norm{\varphi}_{\delta_p, \eta}$ the norm of~\eqref{eq:pair-class} at the extremal modulus (for $p = 1$, the norm of $L^\infty_\eta$). Write $\eta_{1/2}(t) := \eta(t/2)$, again a majorant in $\mathcal D_p$ with comparable integrals. Let $A$ be a large-ball $L^q$ $\hp$-atom on $B_A = B(x_A, r_A)$, $r_A \ge 1$, and $a$ a small-ball $L^q$ $\hp$-atom on $B_a = B(x_a, r_a)$, $r_a < 1$. Then, with vanishing moments up to order $N_p$ in every case:
\begin{enumerate}[label=\textup{(\roman*)}, wide, leftmargin=0pt]
\item $A_\varphi := A - \varphi * A$ is, up to a multiplicative constant depending only on $(n, p, q, \eta, \norm{\varphi}_{\delta_p, \eta})$, a $(p, q, \eta_{1/2})$-molecule of $\Hpn$ associated with $B_A$, at the integrability exponent $q$ of the input atom, which $A_\varphi$ inherits on $B_A$ and which is not improved. For the power majorants $\eta(s) = (1 + s)^{-M}$, $M > n/p$, in particular for admissible Schwartz $\varphi$, this is a classical $(p, q, \varepsilon)$-molecule, for every $\varepsilon \in (0, M - n/p)$. If moreover $\supp(\varphi) \subseteq B(0, r_\varphi)$, then $A_\varphi$ is, up to a constant depending only on $(n, p, q, \norm{\varphi}_1, r_\varphi)$, an $L^q$ $\Hp$-atom associated with $B(x_A,\, r_A + r_\varphi)$.
\item $a_\varphi := \varphi * a$ is, up to a multiplicative constant depending only on $(n, p, \eta, \norm{\varphi}_{\delta_p, \eta})$, a $(p, \tilde q, \eta_{1/2})$-molecule of $\Hpn$ associated with the translated unit ball $B(x_a, 1)$, simultaneously for every $1 < \tilde q \le \infty$. In particular, the integrability exponent $q$ of the input atom is not seen by the conclusion, and for the power majorants these are classical $(p, \tilde q, \varepsilon)$-molecules. If moreover $\supp(\varphi) \subseteq B(0, r_\varphi)$, then $a_\varphi$ is, up to a constant depending only on $(n, p, r_\varphi, \norm{\varphi}_{\delta_p, \eta})$, an $L^\infty$ $\Hp$-atom associated with $B(x_a,\, r_a + r_\varphi)$, hence an $L^{\tilde q}$ atom for every $\tilde q$.
\end{enumerate}
\end{lemma}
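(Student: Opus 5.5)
We verify, for each of $A_\varphi$ and $a_\varphi$, the three clauses (i)--(iii) of a $(p,q,\eta_{1/2})$-molecule, using Lemma~\ref{lem:conv-atom} for the size and tail estimates. Clause (iii) is vacuous in both cases, since both balls, $B_A$ and $B(x_a,1)$, have radius at least $1$. The cancellation clause (ii) comes from $\operatorname{Cond}_G$ through Fubini's theorem. For $\abs{\nu}\le N_p$,
\[
\int x^\nu (\varphi*g)(x)\,dx=\sum_{\mu\le\nu}\binom{\nu}{\mu}\int y^{\mu}\varphi(y)\,dy\int z^{\nu-\mu}g(z)\,dz=\int z^\nu g(z)\,dz ,
\]
because $\int\varphi=1$ and the higher moments of $\varphi$ vanish. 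Absolute convergence is justified by $\abs{\varphi}\le\norm{\varphi}_{\delta_p,\eta}\,\eta$, with $\eta(s)=o(s^{-n/p})$ supplying a power margin below order $n_p$ and $E_\eta(1)<\infty$ handling the top order at integer $n_p$, together with the compact support of $g$. With $g=A$, the moments of $A_\varphi=A-\varphi*A$ are therefore $\int x^\nu A-\int x^\nu A=0$, with no moment condition on $A$ needed. With $g=a$, the moments of $a_\varphi$ equal those of $a$, which vanish because $r_a<1$.

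\textbf{Size for (i).} On $A_0(B_A)=2B_A$, Young's inequality gives $\norm{A_\varphi}_{L^q}\le(1+\norm{\varphi}_1)\norm{A}_q\le C\,\abs{B_A}^{1/q-1/p}$, and~\eqref{eq:phi-lq} bounds $\norm{\varphi}_1$. On $A_k(B_A)$ with $k\ge1$, $A$ vanishes, so $A_\varphi=-\varphi*A$. Lemma~\ref{lem:conv-atom}(ii) gives $\abs{A_\varphi(x)}\le\norm{\varphi}\,\abs{B_A}^{1-1/p}\,\eta(s_k/2)$ there, since $\abs{x-x_A}\ge s_k$ and $\eta$ is non-increasing. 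Integrating over $A_k$ yields
\[
\norm{A_\varphi}_{L^q(A_k)}\le C\,\abs{B_A}^{1-1/p}\,\eta_{1/2}(s_k)\,s_k^{n/q}.
\]
The target is $\eta_{1/2}(s_k)\,s_k^{n/p}\,\abs{2^kB_A}^{1/q-1/p}\asymp\eta_{1/2}(s_k)\,s_k^{n/q}$, so the two differ by the factor $\abs{B_A}^{1-1/p}\le C$, which is harmless for $r_A\ge1$. (At $q=\infty$ the same estimate holds directly.) In the compact-support case, Lemma~\ref{lem:conv-atom}(i) puts $\supp A_\varphi\subseteq B(x_A,r_A+r_\varphi)$. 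The moments were already obtained above, and the size follows from $\norm{A_\varphi}_q\le(1+\norm{\varphi}_1)\abs{B_A}^{1/q-1/p}$, since the ratio $\abs{B(x_A,r_A+r_\varphi)}/\abs{B_A}\le(1+r_\varphi)^n$ is controlled because $r_A\ge1$.

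\textbf{Size for (ii).} The global envelope of Lemma~\ref{lem:conv-atom}(ii) gives $\abs{a_\varphi(x)}\le C\norm{\varphi}\,\eta_{1/2}(\abs{x-x_a})$ everywhere. On $2B(x_a,1)$ we use $\eta\le1$, so the $L^{\tilde q}$ norm there is at most $C\le C\,\abs{B(0,2)}^{1/\tilde q-1/p}$. On $A_k(B(x_a,1))$ we have $\abs{a_\varphi}\le C\,\eta_{1/2}(2^k)$, and therefore $\norm{a_\varphi}_{L^{\tilde q}(A_k)}\le C\,\eta_{1/2}(s_k)\,s_k^{n/\tilde q}$, which is exactly the required bound with $\varepsilon_k=\eta_{1/2}(s_k)s_k^{n/p}$. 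These estimates are uniform in $\tilde q\in(1,\infty]$ and never use $q$. In the compact-support case, $\supp a_\varphi\subseteq B(x_a,r_a+r_\varphi)$, a ball of radius between $r_\varphi$ and $1+r_\varphi$. Then $\norm{a_\varphi}_\infty\le C$ equals $C'\,\abs{B(x_a,r_a+r_\varphi)}^{-1/p}$ with $C'$ depending on $r_\varphi$, and the moments vanish as shown, so $a_\varphi$ is a multiple of an $L^\infty$ $\Hp$-atom.

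\textbf{Remaining points.} That $\eta_{1/2}\in\mathcal D_p$ follows from the substitution $s\mapsto 2s$: the integrals change by the factor $2^{n}$ and a bounded head term. The power-majorant case then reduces to the remark following the molecule definition: $\eta_{1/2}(s_k)s_k^{n/p}\lesssim2^{-k(M-n/p)}$, with the constants absorbed. The main obstacle is the rigorous Fubini justification of the top-order moments at integer $n_p$. There $x^\nu\varphi$ is only borderline integrable, so one must use the finiteness of $E_\eta(1)$ and verify $\int\abs{x}^{N_p}\abs{\varphi*g}<\infty$ from the tail estimates. That integrability holds, again because $\eta\in\mathcal D_p$ forces summability of $\varepsilon_k$.
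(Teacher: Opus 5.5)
Your proposal is correct and follows essentially the paper's own argument: the moments come from the binomial expansion under Fubini together with $\operatorname{Cond}_G$, the size on $2B_A$ from Young's inequality, and the annulus bounds from the pointwise tail and global envelope estimates, with support bookkeeping in the compactly supported cases. The only differences are cosmetic: you quote the envelopes from Lemma~\ref{lem:conv-atom}\textup{(ii)} rather than re-deriving them via the separation estimate and the Taylor remainder, and you obtain the classical-molecule statement from the annulus rates $2^{-k(M-n/p)}$ rather than from the weighted integral~\eqref{eq:power-to-classical}.
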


\begin{proof}
By the convergence discussion following~\eqref{eq:admiss-p}, $\int_{\R^n} (1 + \abs{y})^{N_p}\, \abs{\varphi(y)}\, dy \le C_\eta\, \norm{\varphi}_{\delta_p, \eta} < \infty$. In particular, $\varphi \in L^1$ with $\norm{\varphi}_1 \le C_\eta \norm{\varphi}_{\delta_p, \eta}$, all moment integrals below converge absolutely, and Fubini's Theorem applies throughout. We record the separation estimate. For $z \in B(x_0, r_0)$ and $\abs{x - x_0} \ge 2 r_0$, monotonicity and $\abs{x - z} \ge \tfrac12 \abs{x - x_0}$ give:
\begin{equation}
\label{eq:sep-majorant}
  \eta(\abs{x - z})
  \;\le\; \eta_{1/2}(\abs{x - x_0}),
\end{equation}
and when $r_0 \le 1$ the same bound holds for all $x$ up to the constant $\eta(1)^{-1}$, since on $\abs{x - x_0} \le 2 r_0 \le 2$ one has $\eta(\abs{x - z}) \le 1$ while $\eta_{1/2}(\abs{x - x_0}) \ge \eta(1)$. That $\eta_{1/2} \in \mathcal D_p$ follows from the substitution $s \mapsto 2 s$ in~\eqref{eq:majorant-conditions}. Finally, every $L^q$ $\hp$-atom satisfies~\eqref{eq:atom-l1}.

We now prove part (i), starting with the moments. By Fubini's Theorem, the substitution $z = x - y$, and the Binomial Theorem, for $\abs{\nu} \le N_p$:
\[
  \int (\varphi * A)(x)\, x^\nu\, dx
  = \sum_{\beta \le \nu} \binom{\nu}{\beta}
     \Bigl( \int A(z)\, z^\beta\, dz \Bigr)
     \Bigl( \int \varphi(y)\, y^{\nu - \beta}\, dy \Bigr) .
\]
The term $\beta = \nu$ equals $\int A\, x^\nu\, dx$ by the normalization in $\operatorname{Cond}_G$, and every term $\beta < \nu$ vanishes by the moment conditions of $\operatorname{Cond}_G$. Hence, $\int A_\varphi\, x^\nu\, dx = 0$, for all $\abs{\nu} \le N_p$. (When $N_p = 0$ only $\nu = 0$ occurs and the identity is direct.)

For the size, by the Triangle and Young Inequalities:
\begin{equation}
\label{eq:mol-size-A}
  \norm{A_\varphi}_q
  \;\le\; \bigl( 1 + \norm{\varphi}_1 \bigr)
     \norm{A}_q
  \;\le\; \bigl( 1 + C_\eta
     \norm{\varphi}_{\delta_p, \eta} \bigr)
     \abs{B_A}^{1/q - 1/p} .
\end{equation}

As for the tail, for $\abs{x - x_A} \ge 2 r_A$ we have $A_\varphi(x) = -(\varphi * A)(x)$, and by~\eqref{eq:sep-majorant} on $B_A$ together with~\eqref{eq:atom-l1}:
\[
  \abs{A_\varphi(x)}
  \;\le\; \norm{A}_1
     \sup_{y \in B_A} \abs{\varphi(x - y)}
  \;\le\; \norm{\varphi}_{\delta_p, \eta}\,
     \abs{B_A}^{1 - 1/p}\,
     \eta_{1/2}(\abs{x - x_A}) .
\]
This tail bound passes to the annuli of the definition: since $\eta_{1/2}$ is non-increasing, on $A_k(B_A)$, $k \ge 1$, of inner radius $s_k = 2^k r_A$, it gives $\norm{A_\varphi}_{L^q(A_k(B_A))} \le \norm{\varphi}_{\delta_p, \eta}\, \abs{B_A}^{1 - 1/p}\, \eta_{1/2}(s_k)\, \abs{A_k(B_A)}^{1/q}$, and since $\abs{A_k(B_A)} = (2^n - 1)\, 2^{kn} \abs{B_A}$, $\abs{2^k B_A}^{1/p} = \omega_n^{1/p}\, s_k^{\,n/p}$ and $\abs{B_A}^{1 - 1/p} = \omega_n^{1 - 1/p}\, r_A^{-n_p} \le \omega_n^{1 - 1/p}$ as $r_A \ge 1$, this reads $\norm{A_\varphi}_{L^q(A_k(B_A))} \le (2^n - 1)^{1/q}\, \omega_n\, \norm{\varphi}_{\delta_p, \eta}\, \varepsilon_k\, \abs{2^k B_A}^{1/q - 1/p}$, with $\varepsilon_k$ the sequence of $\eta_{1/2}$. Together with~\eqref{eq:mol-size-A} on $2 B_A$, this is the condition (i) of a $(p, q, \eta_{1/2})$-molecule associated with $B_A$, up to the stated constant, the moments were verified above, and the clause (iii) is void since $r_A \ge 1$. For the power majorants $\eta(s) = (1 + s)^{-M}$, $M > n/p$, the tail bound implies the classical molecular decay. Fix $\varepsilon \in (0, M - n/p)$ and set $\theta_\varepsilon := n(1/p - 1/q) + \varepsilon$, so that $(\theta_\varepsilon - M) q < -n$. Then:
\begin{equation}
\label{eq:power-to-classical}
  \Bigl( \int_{\abs{x - x_A} \ge 2 r_A}
     \abs{A_\varphi(x)}^q
     \abs{x - x_A}^{\theta_\varepsilon q}\, dx
  \Bigr)^{1/q}
  \lesssim \norm{\varphi}_{\delta_p, \eta}\,
     \abs{B_A}^{1/q - 1/p}\, r_A^{\theta_\varepsilon} ,
\end{equation}
with the usual modification at $q = \infty$, the tail bound producing the additional factor $r_A^{n - M} \le 1$, absorbed since $r_A \ge 1$ and $M > n$. Together with~\eqref{eq:mol-size-A}, this is the classical $(p, q, \varepsilon)$-molecule condition of Subsection~\ref{ssec:HP}, in its equivalent weighted integral form, the annulus form following by restricting the integral to each $A_k(B_A)$, for every $\varepsilon \in (0, M - n/p)$, in particular for admissible Schwartz $\varphi$.

In the compactly supported case, if $\supp(\varphi) \subseteq B(0, r_\varphi)$ then $\supp(A_\varphi) \subseteq B^\star := B(x_A, r_A + r_\varphi)$. By~\eqref{eq:mol-size-A} and $\abs{B_A}^{1/q - 1/p} \le (1 + r_\varphi)^{n(1/p - 1/q)} \abs{B^\star}^{1/q - 1/p}$ (using $r_A \ge 1$ and $1/p - 1/q > 0$), the size condition of an $\Hp$-atom on $B^\star$ holds up to the stated constant, and the moments are those already verified.

We now prove part (ii), starting again with the moments. The same binomial expansion gives $\int a_\varphi(x)\, x^\nu\, dx = 0$ for $\abs{\nu} \le N_p$, every factor $\int a(z)\, z^\beta\, dz$ vanishing by the moment conditions of the small atom, so no condition on $\varphi$ is used.

For the pointwise envelope, all conclusions of (ii) rest on the intermediate bound:
\begin{equation}
\label{eq:small-weighted}
  \abs{a_\varphi(x)}
  \;\le\; C(n, p, \eta)\,
     \norm{\varphi}_{\delta_p, \eta}\,
     \eta_{1/2}(\abs{x - x_a}),
  \qquad x \in \R^n .
\end{equation}
For $p = 1$, this is immediate and consumes no regularity: $\abs{a_\varphi(x)} \le \norm{a}_1 \sup_{y \in B_a} \abs{\varphi(x - y)}$, the supremum bounded through both cases of~\eqref{eq:sep-majorant} and $\norm{a}_1 \le 1$. For $p < 1$, let $P_x$ be the Taylor polynomial of degree $N_p'$ of $y \mapsto \varphi(x - y)$ at $y = x_a$. The moments of $a$, of order up to $N_p \ge N_p'$, annihilate it, so $a_\varphi(x) = \int_{B_a} [\varphi(x - y) - P_x(y)]\, a(y)\, dy$, and the Hölder form of the Taylor remainder~\eqref{eq:taylor-holder} at $(k, \delta) = (N_p', \delta_p)$, its top order derivatives controlled at the points $x - z$, $z \in [x_a, y] \subseteq B_a$, by the semi-norm of~\eqref{eq:pair-class} at the extremal modulus and both cases of~\eqref{eq:sep-majorant}, gives:
\[
  \abs{\varphi(x - y) - P_x(y)}
  \;\le\; C\, \norm{\varphi}_{\delta_p, \eta}\,
     \eta_{1/2}(\abs{x - x_a})\, r_a^{\,n_p} .
\]
Multiplying by $\norm{a}_1 \le \abs{B_a}^{1 - 1/p} = \omega_n^{1 - 1/p}\, r_a^{-n_p}$ from~\eqref{eq:atom-l1}, the powers of $r_a$ cancel identically and~\eqref{eq:small-weighted} follows.

We next show the molecule property, for every exponent. Fix $1 < \tilde q \le \infty$. Since $\eta(s) = o(s^{-n/p})$ with $\eta \le 1$, the majorant lies in $L^{\tilde q}$ for every $1 \le \tilde q \le \infty$, the borderline $p = 1$, $\tilde q = 1$ being the convergent moment integral of order zero. Hence, \eqref{eq:small-weighted} gives $\norm{a_\varphi}_{\tilde q} \le C\, \norm{\varphi}_{\delta_p, \eta}$, while $\abs{B(x_a, 1)}^{1/\tilde q - 1/p}$ is a dimensional constant, and for $\abs{x - x_a} \ge 2$ the envelope passes to the annuli of $B(x_a, 1)$ exactly as in part (i), the amplitude being now the constant $C(n, p, \eta)\, \norm{\varphi}_{\delta_p, \eta}$, giving the condition (i) of a $(p, \tilde q, \eta_{1/2})$-molecule associated with $B(x_a, 1)$ up to the constant $(2^n - 1)^{1/\tilde q}\, \omega_n^{1/p}\, C(n, p, \eta)\, \norm{\varphi}_{\delta_p, \eta}$, and the clause (iii) is void since that radius is $1$. The constants are independent of $\tilde q$ and of the exponent $q$ of the input atom, and the moments were verified above. The power-majorant case specializes through~\eqref{eq:power-to-classical}, run at the exponent $\tilde q$ and the translated unit ball, where the radius factors are absent.

In the compactly supported case, if $\supp(\varphi) \subseteq B(0, r_\varphi)$ then $\supp(a_\varphi) \subseteq B^\star := B(x_a, r_a + r_\varphi)$, and~\eqref{eq:small-weighted} gives $\norm{a_\varphi}_\infty \le C\, \norm{\varphi}_{\delta_p, \eta}$. Since $\abs{B^\star} \le \omega_n (1 + r_\varphi)^n$ (as $r_a < 1$), we get $\norm{a_\varphi}_\infty \le C(n, p, \eta, r_\varphi)\, \norm{\varphi}_{\delta_p, \eta}\, \abs{B^\star}^{-1/p}$, the size condition of an $L^\infty$ $\Hp$-atom on $B^\star$, hence of an $L^{\tilde q}$ atom for every $\tilde q$ by Hölder's Inequality on $B^\star$. The moments are those verified above.

\noindent\textit{(The moment conditions of $\operatorname{Cond}_G$ are consumed only in \textup{(i)}, the regularity of $\varphi$ only in \textup{(ii)}, and only for $p < 1$. The decay is consumed in both parts, producing the molecular tails at the halved majorant, and is trivialized when $\varphi$ is compactly supported, where the outputs are atoms.)}
\end{proof}

\section{Molecular theory}
\label{sec:molecular}
\begin{theorem}
\label{lem:eta-molecule} Let $0 < p \le 1 < q \le \infty$ and $\eta \in \mathcal D_p$. Every $(p, q, \eta)$-molecule $m$, associated with a ball $B(x_B, r_B)$ of any radius $r_B > 0$, lies in $\Hpn$, with $\norm{m}_{\Hp, \mathrm{at}} \le C(n, p, q, \eta)$, the constant independent of the molecule, of $x_B$, and of $r_B$.
\end{theorem}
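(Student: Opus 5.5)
The plan is the classical Taibleson--Weiss decomposition of a molecule into atoms along the annuli $A_k(B)$, with the geometric factors $2^{-k\varepsilon}$ replaced by $\varepsilon_k$ throughout. After translating, I take $x_B = 0$, so the moments in (ii) and (iii) are centred at the origin. Write $B_k := 2^{k+1}B$, $m_k := m\,\one_{A_k(B)}$ and $\mathcal P$ for the polynomials of degree $\le N_p$. Let $P_k \in \mathcal P$ be the unique polynomial with $\int_{B_k} (m_k - P_k)\, x^\nu = 0$ for $\abs{\nu} \le N_p$. The standard bound on $L^2(B_k)$-projections onto $\mathcal P$ (obtained by rescaling $B_k$ to the unit ball) gives $\norm{P_k}_{L^\infty(B_k)} \lesssim \abs{B_k}^{-1/q}\norm{m_k}_q$. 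Hence $m_k - P_k\one_{B_k}$ is, by clause (i), a multiple $\lesssim \varepsilon_k$ of an $L^q$ $\Hp$-atom on $B_k$.

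The $\ell^p$ sum of these coefficients is controlled uniformly in $r_B$. By monotonicity, $\varepsilon_k^p \lesssim \int_{s_{k-1}}^{s_k} (\eta(s) s^{n/p})^p\, ds/s$ for $k \ge 1$. Summing over $k$ gives at most $\int_0^\infty (\eta(s)s^{n/p})^p\,ds/s$. On $(0,1)$ the integrand is at most $s^{n}$, since $\eta \le 1$, and on $[1,\infty)$ it is finite by~\eqref{eq:majorant-conditions}. The term $k=0$ contributes the constant $\varepsilon_0 = 1$.

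It remains to place $\sum_k P_k\one_{B_k}$ in $\Hp$. Let $\{\phi^k_\nu\}_{\abs{\nu}\le N_p} \subseteq \mathcal P$ be the basis dual to the monomials on $B_k$, i.e.\ $\int_{B_k} \phi^k_\nu x^\mu = \delta_{\nu\mu}$. By scaling, $\abs{\phi^k_\nu} \lesssim \abs{B_k}^{-1} (2^k r_B)^{-\abs{\nu}}$. Then $P_k\one_{B_k} = \sum_\nu M^k_\nu \phi^k_\nu \one_{B_k}$ with $M^k_\nu := \int m_k x^\nu$. Set $N^k_\nu := \sum_{j \ge k} M^j_\nu$. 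Clause (ii) gives $N^0_\nu = 0$, and Abel summation gives
\[
\sum_k P_k\one_{B_k} = \sum_{\nu}\sum_{k \ge 0} N^{k+1}_\nu \bigl(\phi^{k+1}_\nu\one_{B_{k+1}} - \phi^k_\nu\one_{B_k}\bigr).
\]
Each bracket is supported in $B_{k+1}$ and has vanishing moments to order $N_p$. It is therefore a multiple $\lesssim \abs{B_{k+1}}^{1/p-1}(2^k r_B)^{-\abs{\nu}}$ of an $L^\infty$ $\Hp$-atom. Hölder's inequality and (i) give $\abs{M^j_\nu} \lesssim \varepsilon_j\, \abs{2^jB}^{1-1/p}(2^j r_B)^{\abs{\nu}}$. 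The coefficient of the $k$-th atom is thus
\[
c_{k,\nu} \lesssim \sum_{j > k} \varepsilon_j\, 2^{(j-k)(\abs{\nu} - n_p)} .
\]
For $\abs{\nu} < n_p$ this is a geometric convolution of $(\varepsilon_j)$, so $\sum_k c_{k,\nu}^p \lesssim \sum_j \varepsilon_j^p$, which was bounded above. This settles the fractional case entirely.

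The main obstacle is the top order $\abs{\nu} = n_p$ at integer $n_p$, where no geometric gain is available and $c_{k,\nu} \approx \sum_{j>k}\varepsilon_j \approx E_\eta(s_k)$, up to halving the radius. I will split the scales according to $s_k \ge 1$ or $s_k < 1$.

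For the scales with $s_k \ge 1$, the sum $\sum_k E_\eta(s_k)^p$ is dominated by $\int_{1/2}^\infty E_\eta(t)^p\,dt/t$. This is finite by the integer condition in~\eqref{eq:majorant-conditions}, after a harmless extension of the lower limit from $1$ to $1/2$ using $E_\eta(1/2) < \infty$.

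For the roughly $\log_2(1/r_B)$ scales with $s_k < 1$, which occur only when $r_B < 1$, I will not use the tail representation. Instead I write, using (ii),
\[
N^{k+1}_\nu = -\int_{2B} m\,x^\nu - \sum_{1 \le j \le k} M^j_\nu .
\]
For $j \ge 1$ with $s_j < 1$ we have $\abs{M^j_\nu} \lesssim \varepsilon_j \le s_j^{n/p}$. Hence the partial sum is $\lesssim s_k^{n/p}$, which is geometric over these $k$ and so contributes $O(1)$ to the $\ell^p$ sum. The first term is at most $(1+\log(1/r_B))^{-1/p}$ by clause (iii). Raised to the power $p$ and summed over $\lesssim 1 + \log(1/r_B)$ scales, it contributes $O(1)$. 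This is exactly the step consuming clause (iii), matching the remark that without it the bound degrades to $(1+\log(1/r_B))^{1/p}$.

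Finally, all the series converge in $\St$. Both the pieces $m_k - P_k\one_{B_k}$ and the Abel-summed polynomial pieces converge in $L^1_{\mathrm{loc}}$, with coefficients in $\ell^p \subseteq \ell^1$. Their sum recovers $m$, because the partial sums equal $\sum_{k\le K} m_k$ plus a remainder term $N^{K+1}_\nu \phi^{K}_\nu \one_{B_{K}}$ that tends to $0$. The resulting atomic quasi-norm bound depends only on $(n,p,q,\eta)$.
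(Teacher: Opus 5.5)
Your proposal is correct and is essentially the paper's proof. Both use annular pieces corrected by dual polynomials, then Abel summation into tail moments with the zeroth one vanishing by (ii), and a geometric convolution for $\abs{\nu} < n_p$. At integer $n_p$ you make the same split of the scales: $s_k \ge 1$ is handled by the iterated Dini condition, and $s_k < 1$ by clause (iii) together with the partial-sum bound $\lesssim s_k^{n/p}$.

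The only difference is cosmetic. You project onto polynomials on the balls $2^{k+1}B$, whereas the paper uses dual polynomials supported in the annuli $A_k$ themselves.
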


\begin{proof}
Translations preserve the conditions (i)--(iii), the annulus bounds and the moment clause \textup{(iii)} being written in $x - x_B$ and the vanishing moments by the binomial identity, which uses all orders up to $N_p$ at once, as well as the quasi-norm $\norm{\cdot}_{\Hp, \mathrm{at}}$. With that, without loss of generality, we may take the center of the ball to be the origin, and we write $r := r_B$, $s_k := 2^k r$ for $k \ge 0$, $A_0 := 2B$, $A_k := B(0, s_{k+1}) \setminus B(0, s_k)$ for $k \ge 1$, and $b_k := \varepsilon_k = \eta(s_k)\, s_k^{\,n/p}$ for $k \ge 1$. By monotonicity, $\sum_{k \ge 1} b_k^{\,p} \le C \int_{r}^\infty \bigl( \eta(s)\, s^{n/p} \bigr)^p\, \tfrac{ds}{s}$, and this integral is at most $C_\eta$ over $[1, \infty)$ by~\eqref{eq:majorant-conditions} and at most $\int_0^1 s^{n - 1}\, ds = 1/n$ over $[\min(r, 1), 1]$ since $\eta \le 1$, so that $\sum_{k \ge 1} b_k^{\,p} \le C'_\eta$, uniformly in $r > 0$. Throughout, Hölder's Inequality on an annulus turns \textup{(i)} into the moment bound $\int_{A_j} \abs{m(y)}\, \abs{y}^{\abs{\nu}}\, dy \le \norm{m}_{L^q(A_j)}\, \norm{\abs{y}^{\abs{\nu}}}_{L^{q'}(A_j)} \le C\, \eta(s_j)\, s_j^{\,\abs{\nu} + n}$ for $j \ge 1$, the powers of $s_j$ collecting through $n/p - n = n_p$, and into $\int_{A_0} \abs{m(y)}\, \abs{y}^{\abs{\nu}}\, dy \le C\, r^{\abs{\nu} - n_p}$ on $2B$. On the reference ball $B(0, 2)$ and annulus $B(0, 2) \setminus B(0, 1)$, the Gram matrix of the monomials $\{ y^\mu \}_{\abs{\mu} \le N_p}$ is invertible, and its dilated dual polynomials give $\pi^\nu_k$ supported in $A_k$ with $\int_{A_k} \pi^\nu_k(y)\, y^\mu\, dy = \delta_{\nu \mu}$ and $\norm{\pi^\nu_k}_\infty \le C\, s_k^{-n - \abs{\nu}}$. With the tail moments $N^\nu_k := \int_{\abs{y} \ge s_k} m(y)\, y^\nu\, dy$ for $k \ge 1$ and $N^\nu_0 := \int_{\R^n} m(y)\, y^\nu\, dy$, absolutely convergent, with $N^\nu_0 = 0$ by \textup{(ii)} of the definition of these molecules, so that $\int_{A_k} m(y)\, y^\nu\, dy = N^\nu_k - N^\nu_{k+1}$ for every $k \ge 0$ and $g_k := m \one_{A_k} - \sum_{\abs{\nu} \le N_p} \bigl( \int_{A_k} m\, y^\nu \bigr) \pi^\nu_k$, Abel summation gives:
\[
  m \;=\; \sum_{k \ge 0} g_k
  \;+\; \sum_{k \ge 1} \sum_{\abs{\nu} \le N_p}
     N^\nu_k \bigl( \pi^\nu_k - \pi^\nu_{k-1} \bigr),
\]
convergent absolutely a.e.\ and in $L^1$, every summand supported in $2^{k+1} B$ with vanishing moments up to $N_p$. For the corrected pieces, \textup{(i)} on $A_k$ and the moment bound above, against $\norm{\pi^\nu_k}_q \le C\, s_k^{-n - \abs{\nu} + n/q}$, give $\norm{g_k}_q \le C\, \varepsilon_k\, \abs{2^{k+1} B}^{1/q - 1/p}$ for every $k \ge 0$, the powers of $s_k$ cancelling identically, so that $g_k = \lambda_k b'_k$ with $b'_k$ an $L^q$ atom associated with $2^{k+1} B$ and $\lambda_k \le C\, \varepsilon_k$. Hence $\lambda_0 \le C$ and $\sum_{k \ge 1} \lambda_k^p \le C \sum_{k \ge 1} b_k^{\,p} \le C\, C'_\eta$, uniformly in $r > 0$. For the tail-moment stream, each difference $\pi^\nu_k - \pi^\nu_{k-1}$ is $C\, s_k^{-n - \abs{\nu}}$ times an $L^\infty$ atom associated with $2^{k+1} B$, so its coefficient is $\lambda^\nu_k \le C \abs{N^\nu_k}\, s_k^{\,n_p - \abs{\nu}}$, and decomposing the tail moment over the annuli $A_j$, $j \ge k$, by the moment bound:
\[
  \lambda^\nu_k
  \;\le\; C\, s_k^{\,n_p - \abs{\nu}} \sum_{j \ge k}
     \eta(s_j)\, s_j^{\,\abs{\nu} + n}
  \;=\; C \sum_{j \ge k}
     b_j\, 2^{-(j - k)(n_p - \abs{\nu})} .
\]
For $\abs{\nu} < n_p$ this is a discrete convolution of $(b_j) \in \ell^p$ with a geometric sequence, hence in $\ell^p$ by $p$-sub-additivity, with $\ell^p$-mass at most $C\, C'_\eta$, uniformly in $r > 0$. For $\abs{\nu} = N_p = n_p$ (the integer case), the geometric factor is absent, and $\sum_{j \ge k} b_j \le 2^{n/p} E_\eta(s_{k-1})$ by monotonicity, so that, again by monotonicity, the scales $s_k \ge 1$ contribute $\sum_{k \ge 1,\, s_k \ge 1} (\lambda^\nu_k)^p \le C \int_{1/4}^\infty E_\eta(t)^p\, \tfrac{dt}{t} \le C \bigl( E_\eta(1/4)^p + C_\eta \bigr)$, controlled by the integer condition of~\eqref{eq:majorant-conditions} and by $E_\eta(1/4) \le 1 + E_\eta(1) < \infty$, uniformly in $r > 0$. For $r \ge 1$ this exhausts the stream. For $r < 1$ the scales $s_k < 1$, at most $\log_2(1/r)$ of them, remain, and there the moment clause \textup{(iii)} intervenes: by \textup{(ii)}, $N^\nu_1 = - \int_{2B} m(y)\, y^\nu\, dy$, so that $\abs{N^\nu_1} \le (1 + \log(1/r))^{-1/p}$, and $N^\nu_k = N^\nu_1 - \sum_{1 \le j < k} \int_{A_j} m\, y^\nu$ with $\abs[\big]{\int_{A_j} m\, y^\nu} \le C\, \eta(s_j)\, s_j^{\,n/p} \le C\, s_j^{\,n/p}$, since $\abs{\nu} + n = n/p$ and $\eta \le 1$, whence $\lambda^\nu_k \le C \bigl( (1 + \log(1/r))^{-1/p} + s_k^{\,n/p} \bigr)$ for $s_k < 1$. Summing the $p$-th powers over these scales, the first term contributes at most $C \log_2(1/r) / (1 + \log(1/r)) \le C$ and the second at most $C \sum_{s_k < 1} s_k^{\,n} \le C$, so this part of the stream is uniformly bounded as well. Without \textup{(iii)}, the clause $k = 0$ and Hölder's Inequality on $2B$ give only $\abs{N^\nu_1} \le C$, and the same count yields $\norm{m}_{\Hp, \mathrm{at}} \le C (1 + \log(1/r))^{1/p}$, which is the growth the moment clause is designed to remove, and which Proposition~\ref{prop:log-sharp} shows to be attained. Altogether, the decomposition exhibits $m$ as an atomic series with $\ell^p$-coefficient mass at most $C(n, p, q, \eta)^p$, which is the claimed bound on $\norm{m}_{\Hp, \mathrm{at}}$.
\end{proof}

\begin{proposition}
\label{prop:log-sharp} Let $0 < p \le 1$ with $n_p \in \Z$, let $1 < q \le \infty$ and $\eta \in \mathcal D_p$. Fix a multi-index $\nu_0$ with $\abs{\nu_0} = n_p$ and $\Psi \in C^\infty_c(B(0, 1))$ with $\int \Psi(y)\, y^\mu\, dy = \delta_{\mu \nu_0}$ for all $\abs{\mu} \le N_p$, and set $D_t := t^{-n/p}\, \Psi(\cdot / t)$ for $t > 0$. For $0 < r < 1$, the function $m_r := D_r - D_1$ satisfies the conditions \textup{(i)} and \textup{(ii)} of a $(p, q, \eta)$-molecule associated with $B(0, r)$, up to a constant depending only on $(n, p, q, \eta(1))$, with $\int_{B(0, 2r)} m_r(y)\, y^{\nu_0}\, dy = 1 + O(r^{n/p})$, and there are $r_0 \in (0, 1)$ and $c > 0$, depending only on $(n, p)$, such that $\norm{m_r}_{\Hp} \ge c\, (\log(1/r))^{1/p}$ for every $0 < r \le r_0$. Consequently, the bound $C (1 + \log(1/r_B))^{1/p}$ available without the clause \textup{(iii)} is attained, and \textup{(iii)} cannot be relaxed to any rate $\theta(r_B)$ with $\theta(r_B)\, (1 + \log(1/r_B))^{1/p} \to \infty$ as $r_B \to 0$.
\end{proposition}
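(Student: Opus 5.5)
The plan is to verify the molecular conditions (i)--(ii) for $m_r$ by direct computation from the scaling of $D_t$. The lower bound will come from a pointwise estimate of the maximal function of $m_r$ on the intermediate region $r \ll \abs{x} \ll 1$, where $m_r$ looks like a pure $\nu_0$-th order dipole at the origin of unit strength.

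\emph{Moments and the clause integral.} By the change of variables $y = t u$, $\int D_t(y)\, y^\mu\, dy = t^{-n/p + n + \abs{\mu}} \delta_{\mu\nu_0} = t^{\abs{\mu} - n_p}\delta_{\mu \nu_0}$. Since $\abs{\nu_0} = n_p$, this equals $\delta_{\mu\nu_0}$ independently of $t$. Hence $\int m_r\, y^\mu = 0$ for all $\abs{\mu} \le N_p = n_p$, which is (ii). For the clause integral I will use $\supp D_r \subseteq B(0,r)$. Then $\int_{B(0,2r)} m_r\, y^{\nu_0} = 1 - \int_{B(0,2r)} D_1\, y^{\nu_0}$, and the last term is at most $\norm{\Psi}_\infty r^{n + n_p} = O(r^{n/p})$.

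\emph{Size (i).} The piece $D_r$ lives in $2B$, with $\norm{D_r}_q = r^{n/q - n/p}\norm{\Psi}_q \simeq \abs{2B}^{1/q-1/p}$. The piece $D_1$ is bounded by $\norm{\Psi}_\infty$ and supported in $B(0,1)$. On $A_0 = 2B$ this gives $\norm{D_1}_{L^q(2B)} \lesssim r^{n/q} \le \abs{2B}^{1/q - 1/p}$. On $A_k$ with $s_k < 1$ it gives $\norm{D_1}_{L^q(A_k)} \lesssim s_k^{n/q}$. The target there is $\varepsilon_k \abs{2^kB}^{1/q-1/p} \simeq \eta(s_k)\, s_k^{n/q} \ge \eta(1)\, s_k^{n/q}$, so the constant depends on $\eta(1)$. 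Annuli with $s_k \ge 1$ meet no support. Thus (i) holds up to $C(n,p,q,\eta(1))$.

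\emph{Lower bound (main step).} I will use $\norm{m_r}_{\Hp}^p \gtrsim \int \abs{\sup_{t>0} \abs{\psi_t * m_r}}^p$ for a fixed Schwartz $\psi$. I choose $\psi$ with $\partial^{\nu_0}\psi(e) \neq 0$ at some unit vector $e$, so that $\abs{\partial^{\nu_0}\psi} \ge c_0$ on $\{u : \abs{u - e} < \kappa\}$. Fix $x$ in the cone $\Gamma := \{ x : \abs{x/\abs{x} - e} < \kappa \}$ with $2r \le \abs{x} \le r_1$, and take $t = \abs{x}$.

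Taylor-expanding $y \mapsto \psi_t(x - y)$ at $y = 0$ to order $n_p$ and using the moments of $D_r$ gives
\[
\psi_t * D_r(x) = \tfrac{(-1)^{n_p}}{\nu_0!}\, t^{-n-n_p}(\partial^{\nu_0}\psi)(x/t) + O\bigl(t^{-n-n_p-1}\textstyle\int \abs{D_r}\abs{y}^{n_p+1}\bigr).
\]
The error term is $O(t^{-n/p}\, r/t)$. Separately, $\abs{\psi_t * D_1(x)} \le \norm{\psi}_1 \norm{\Psi}_\infty$. Hence $\abs{\psi_t * m_r(x)} \ge c\, \abs{x}^{-n/p}$ once $r/\abs{x}$ and $\abs{x}^{n/p}$ are below a small $(n,p)$-constant, that is for $Kr \le \abs{x} \le r_1$.

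Integrating over the cone in polar coordinates gives
\[
\norm{m_r}_{\Hp}^p \gtrsim \int_{Kr}^{r_1} s^{-n} s^{n-1}\, ds \simeq \log(1/r)
\]
for $r \le r_0$, which is $\norm{m_r}_{\Hp} \ge c\,(\log(1/r))^{1/p}$. The case $p = 1$ is identical, with $\nu_0 = 0$ and a normalized bump. The point to check carefully is that the remainder constants depend only on $(n,p)$ through the fixed $\Psi$ and $\psi$; this is the main obstacle but appears routine. A duality argument against a log-Lipschitz function would only give $\log(1/r)$, which is why I go through the maximal function.

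\emph{Consequences.} The upper bound $C(1 + \log(1/r_B))^{1/p}$ is the one noted at the end of the proof of Theorem~\ref{lem:eta-molecule}, so it is attained by $m_r$. For the final claim, suppose a relaxed clause with rate $\theta$ were allowed. Then $\theta(r)\, m_r / C$ satisfies (i), (ii) and the relaxed clause, because its clause integral is $\theta(r)(1 + O(r^{n/p}))/C$. Its $\Hp$ norm is at least $c\,\theta(r)(\log(1/r))^{1/p} \to \infty$, so no uniform molecular bound can survive.
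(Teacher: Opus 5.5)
Your proposal is correct and is essentially the paper's proof. The shared steps are the dilation identity $\int D_t(y)\, y^\mu\, dy = t^{\abs{\mu} - n_p}\delta_{\mu\nu_0}$ for (ii), the same annulus-by-annulus size check with the $\eta(1)^{-1}$ loss on the scales $s_k < 1$, the order-$n_p$ Taylor expansion of the test function at scale $t = \abs{x}$ on a cone where its $\nu_0$-derivative is bounded below, the constant bound on the $D_1$ contribution, the region $Kr \le \abs{x} \le \delta_0$ integrated in polar coordinates, and the scaling argument for the final consequences.

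The only cosmetic difference is that the paper also requires $\int \Phi \ne 0$ of its test function. Your argument does not need this, since it only uses the direction $\norm{\sup_{t>0} \abs{\psi_t * f}}_p \lesssim \norm{f}_{\Hp}$, which holds for any fixed Schwartz $\psi$.
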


\begin{proof}
Fix $\Psi_0 \in C^\infty_c(B(0, 1))$, non-negative, positive on $B(0, 1/2)$, with $\int \Psi_0 = 1$. The Gram matrix $\bigl( \int y^{\mu + \lambda}\, \Psi_0(y)\, dy \bigr)_{\abs{\mu}, \abs{\lambda} \le N_p}$ is positive definite, a non-zero polynomial not vanishing on the open set where $\Psi_0 > 0$, so $\Psi := \Psi_0 \sum_{\abs{\mu} \le N_p} a_\mu\, y^\mu$, with $a$ the solution of the linear system with right side $(\delta_{\mu \nu_0})_\mu$, has the required moments, and $\Psi$ depends only on $(n, p)$. Dilation gives $\int D_t(y)\, y^\mu\, dy = t^{\abs{\mu} - n_p}\, \delta_{\mu \nu_0} = \delta_{\mu \nu_0}$ for $\abs{\mu} \le N_p$, since $n + n_p = n/p$, so \textup{(ii)} holds for $m_r$. Write $B := B(0, r)$ and $s_k := 2^k r$. On $2B$, $\norm{D_r}_q = \norm{\Psi}_q\, r^{n(1/q - 1/p)} = \omega_n^{1/p - 1/q} \norm{\Psi}_q\, \abs{B}^{1/q - 1/p}$, while $\norm{D_1}_{L^q(2B)} \le \norm{\Psi}_\infty\, \abs{2B}^{1/q} \le 2^{n/q} \omega_n^{1/p} \norm{\Psi}_\infty\, \abs{B}^{1/q - 1/p}$ as $r < 1$, which is the clause $k = 0$ of \textup{(i)} up to a constant, with the usual modification at $q = \infty$. On $A_k(B)$, $k \ge 1$, $D_r$ vanishes, so $m_r = - D_1$ there, which vanishes when $s_k \ge 1$, while for $s_k < 1$, $\norm{D_1}_{L^q(A_k(B))} \le \norm{\Psi}_\infty\, \abs{A_k(B)}^{1/q} = C\, s_k^{\,n/q} \le C\, \eta(1)^{-1}\, \eta(s_k)\, s_k^{\,n/q} = C\, \eta(1)^{-1}\, \omega_n^{1/p - 1/q}\, \varepsilon_k\, \abs{2^k B}^{1/q - 1/p}$ by monotonicity of $\eta$, which is \textup{(i)} up to a constant depending on $\eta(1)$. The critical moment on $2B$ is $\int_{2B} D_r\, y^{\nu_0} - \int_{2B} D_1\, y^{\nu_0} = 1 + O(\norm{\Psi}_\infty\, r^{n + n_p})$, with $n + n_p = n/p$.

For the lower bound, fix $\Phi \in C^\infty_c(\R^n)$ with $\int \Phi \ne 0$ and $\partial^{\nu_0} \Phi(\mathbf{e}_1) \ne 0$ at the first coordinate vector $\mathbf{e}_1$, for instance $\Phi := \Psi_0 + \delta\, (\cdot - \mathbf{e}_1)^{\nu_0}\, \Psi_0(\cdot - \mathbf{e}_1)$ with $\delta > 0$ small: Leibniz's rule gives $\partial^{\nu_0} \Phi(\mathbf{e}_1) = \delta\, \nu_0!\, \Psi_0(0) > 0$, since $\Psi_0$ vanishes near $\mathbf{e}_1$ and every derivative of $(\cdot - \mathbf{e}_1)^{\nu_0}$ of order below $n_p$ vanishes at $\mathbf{e}_1$, while $\int \Phi \ge 1 - \delta \abs{\int y^{\nu_0} \Psi_0} > 0$. By continuity, $\abs{\partial^{\nu_0} \Phi(\omega)} \ge c_\Phi := \tfrac12 \abs{\partial^{\nu_0} \Phi(\mathbf{e}_1)}$ on a cap $\Omega \subseteq S^{n - 1}$ around $\mathbf{e}_1$, of positive surface measure $\sigma(\Omega)$. We compute $\norm{\cdot}_{\Hp, \max}$ with this $\Phi$, all such choices being equivalent (Subsection~\ref{ssec:HP}). Fix $x \ne 0$, put $t := \abs{x}$ and $\omega := x / \abs{x}$, and suppose $\abs{x} \ge K r$ with $K \ge 2$ to be chosen. Taylor's formula for $\Phi_t$ at $x$, to order $n_p$, with $\norm{\partial^\mu \Phi_t}_\infty \le C_\Phi\, t^{-n - \abs{\mu}}$ for $\abs{\mu} \le n_p + 1$, reads $\Phi_t(x - y) = \sum_{\abs{\mu} \le n_p} \frac{(-y)^\mu}{\mu!}\, \partial^\mu \Phi_t(x) + R_t(x, y)$ with $\abs{R_t(x, y)} \le C_\Phi\, \abs{y}^{n_p + 1}\, t^{-n - n_p - 1}$, so that, by the moments of $D_r$ and $\int \abs{y}^{n_p + 1} \abs{D_r(y)}\, dy \le \norm{\Psi}_1\, r$:
\[
  \Phi_t * D_r(x)
  \;=\; \frac{(-1)^{n_p}}{\nu_0!}\,
     \partial^{\nu_0} \Phi_t(x)
  \;+\; O\Bigl( C_\Phi \norm{\Psi}_1\,
     \frac{r}{\abs{x}}\, \abs{x}^{-n/p} \Bigr),
  \qquad
  \partial^{\nu_0} \Phi_t(x)
  \;=\; \abs{x}^{-n/p}\, (\partial^{\nu_0} \Phi)(\omega),
\]
while $\abs{\Phi_t * D_1(x)} \le \norm{\Phi}_1 \norm{\Psi}_\infty =: C_1$. Choose:
\[
  K := \frac{4\, \nu_0!\, C_\Phi \norm{\Psi}_1}{c_\Phi},
  \qquad
  \delta_0 := \min\Bigl( \frac12,\;
     \Bigl( \frac{c_\Phi}{4\, \nu_0!\, C_1} \Bigr)^{p/n} \Bigr).
\]
For $\omega \in \Omega$ and $K r \le \abs{x} \le \delta_0$, the three bounds give:
\[
  \sup_{t > 0} \abs{\Phi_t * m_r(x)}
  \;\ge\; \abs{\Phi_{\abs{x}} * m_r(x)}
  \;\ge\; \frac{c_\Phi}{4\, \nu_0!}\, \abs{x}^{-n/p} .
\]
Integrating $\abs{x}^{-n}$ in polar coordinates over this region, of angular measure $\sigma(\Omega)$, gives $\norm{m_r}_{\Hp, \max}^p \ge \bigl( \tfrac{c_\Phi}{4 \nu_0!} \bigr)^p \sigma(\Omega)\, \log\bigl( \delta_0 / (K r) \bigr) \ge \tfrac12 \bigl( \tfrac{c_\Phi}{4 \nu_0!} \bigr)^p \sigma(\Omega)\, \log(1/r)$ for $r \le r_0 := (\delta_0 / K)^2$, which is the claim, the constants depending only on $(n, p)$ through $\Psi$ and $\Phi$. The final assertions follow by scaling: without \textup{(iii)}, $m_r / C$ is a molecule with $\norm{m_r / C}_{\Hp} \ge (c / C)\, (\log(1/r))^{1/p}$, and under a relaxed rate $\theta$, the multiple $\theta(r)\, m_r / C'$, whose critical moment on $2B$ is at most $\theta(r)$, has quasi-norm at least $(c / C')\, \theta(r)\, (\log(1/r))^{1/p}$.
\end{proof}

\section{Goldberg's splitting: sufficiency and sharpness of the endpoints}
\label{sec:sharpness}
\label{ssec:endpoint}
In this section, we prove that the class~\eqref{eq:admiss-p} is exactly calibrated on all three of its axes, so that $\operatorname{Admiss}_{p}$ is maximal for the Goldberg-type splitting (Theorem~\ref{thm:admiss-characterization}). The three axes differ in kind, not merely in name: cancellation is an exact algebraic requirement, admitting no partial credit or rate, while regularity and decay are quantitative, each admitting its own sharp threshold, and the necessity of each is accordingly established by a separate construction, tailored to that kind of failure, rather than by one argument specialized three ways. Sufficiency is proved directly within Theorem~\ref{thm:admiss-characterization}, and the necessity of each condition is established by a construction on its axis, producing in every case functions admissible in every respect except the one under scrutiny. On the regularity axis, a lacunary counterexample defeats every $\omega \notin \mathcal R_p$, the uniform bounds on mollified atoms failing at a quantified rate and Goldberg's splitting failing at a single explicit $f \in \hp$ (Proposition~\ref{prop:linftyc-fails}). On the decay axis, a single atom establishes the necessity of~\eqref{eq:majorant-conditions}, the failing clause of Goldberg's splitting sorted by the failing integral: a divergent fractional integral defeats the $L^p$-clause, while at integer $n_p$ the fractional integral may converge while the iterated one diverges, in which case the $L^p$-clause holds (Lemma~\ref{lem:conv-atom}\textup{(iii)} consuming only the fractional condition) and the failure migrates to the $\Hp$-clause through the top order tail moments (Proposition~\ref{prop:decay-endpoint-fails}). On the cancellation axis, for each moment condition, a single explicit $f \in \hp$, independent of the pair $(\omega, \eta)$, witnesses Goldberg's splitting failing for every $\varphi$ violating that condition alone (Proposition~\ref{prop:cond-g-sharp}). The three sharpness propositions below then assemble, with the sufficiency argument, into the characterization: Goldberg's splitting selects exactly the admissible pairs and~\eqref{eq:admiss-p} is the largest class the Goldberg splitting tolerates.

\begin{proposition}
\label{prop:linftyc-fails} Let $0 < p < 1$, let $0 \le k \le N_p'$, and let $\omega$ be a modulus with $\displaystyle \sup_{0 < t \le 1}\, \omega(t)\, t^{-(n_p - k)} = \infty$. There exist $\varphi \in C^{k, \omega}_c(\R^n)$ satisfying $\operatorname{Cond}_G$, so that $\varphi$ fails~\eqref{eq:admiss-p} only through its regularity, being dominated together with all its derivatives by every majorant on account of its compact support, a sequence $(a_j)$ of $L^\infty$ $\hp$-atoms on balls $B_j = B(0, \sqrt{n}\, 2^{-j})$ with vanishing moments up to $N_p$, and a single $f \in \hpn \cap L^1_{\mathrm{loc}}(\R^n)$, such that, along a sequence of scales $j = i_m \to \infty$:
\[
  \norm{\varphi * a_{i_m}}_p
  \;\ge\; c_*\, \Omega_k(2^{-i_m})
  \;\xrightarrow[m \to \infty]{}\; \infty,
  \qquad
  \norm{a_{i_m} - \varphi * a_{i_m}}_{\Hpn}
  \;\xrightarrow[m \to \infty]{}\; \infty ,
\]
where $\Omega_k(t) := \omega(t)\, t^{-(n_p - k)}$ and $c_* > 0$ depends only on $(n, p, k)$, and the pointwise defined convolution $\varphi * f$ does not belong to $\Lpn$. In particular, the uniform bounds of Lemma~\ref{lem:goldberg-extension}\textup{(ii)}, Lemma~\ref{lem:conv-atom}\textup{(iii)} and~\eqref{eq:unif-Linfty} fail for this $\varphi$, and with them not only the term-wise treatment of Goldberg's splitting but its conclusion itself, at the single $f$. At $k = N_p'$ this is the necessity of the modulus condition~\eqref{eq:modulus-condition} in Theorem~\ref{thm:admiss-characterization}, and on the slice $\omega(t) = t^\theta$, it recovers the failure of every Hölder total order $k + \theta$ strictly below $n_p$.
\end{proposition}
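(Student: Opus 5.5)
The plan is to build a lacunary $\varphi$ whose $k$-th order derivatives carry oscillations at the scales $2^{-i_m}$ where $\Omega_k$ is large, and to test it against atoms matched to those scales. Since $\sup_t \omega(t) t^{-(n_p-k)} = \infty$ and $\omega(t)/t$ is non-increasing, we may choose a sparse sequence $i_m \to \infty$ with $\Omega_k(2^{-i_m}) \ge 4^m$ (say), rapidly separated. Fix a bump $\theta \in C^\infty_c(B(0,1/4))$ with a nonzero moment pattern tailored below, and set
\[
\varphi_0(x) := \sum_m m^{-2}\,\omega(2^{-i_m})\,2^{-i_m k}\,\theta_m(x),\qquad \theta_m(x) := \theta\bigl(2^{i_m}(x - x_m)\bigr),
\]
with disjointly placed centres $x_m$ in a fixed ball (or all at the origin with the lacunarity providing almost orthogonality). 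Each term has $\partial^\nu$, $\abs{\nu} = k$, of size $m^{-2}\omega(2^{-i_m})$ and increments at step $h$ bounded by $m^{-2}\min\bigl(\omega(2^{-i_m}),\, \omega(2^{-i_m})2^{i_m}\abs{h}\bigr) \lesssim m^{-2}\omega(\abs{h})$ by the monotonicity of $\omega$ and of $\omega(t)/t$; summing over $m$ gives $\varphi_0 \in C^{k,\omega}_c$. Then $\varphi := \varphi_0 + v$ with $v$ from Lemma~\ref{lem:moment-correction} correcting to $\operatorname{Cond}_G$; $v$ is smooth and contributes uniformly bounded quantities throughout (by Lemma~\ref{lem:conv-atom}), so it can be ignored in the lower bounds.

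Next the atoms: take $a_j := c\,2^{jn/p}\,\Theta(2^j x)$ with $\Theta$ supported in the cube $[-1,1]^n \subseteq B(0,\sqrt n)$, with vanishing moments up to $N_p$ but a nonzero pairing against $\theta$ (e.g.\ $\Theta := \Delta^{M}\theta^\sharp$ for a suitable bump, so that moments vanish automatically). The key computation is $\varphi * a_{i_m}$ near $x_m$: the $m$-th term gives exactly $m^{-2}\omega(2^{-i_m})2^{-i_m k}\,2^{i_m n/p}2^{-i_m n}(\theta*\Theta)(2^{i_m}(\cdot - x_m))$, whose $L^p$ norm is $c\,m^{-2}\omega(2^{-i_m})2^{-i_m k}2^{i_m(n/p - n)}2^{-i_m n/p}\cdot 2^{i_m n/p}\cdots$, i.e.\ after bookkeeping $c\,m^{-2}\Omega_k(2^{-i_m})$, with $c\neq0$ once $\theta*\Theta \not\equiv 0$. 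The other terms are negligible: for $m' < m$ the coarser bump is smooth at scale $2^{-i_m}$ and the moments of $a_{i_m}$ give a Taylor gain $(2^{i_{m'}-i_m})^{N_p+1}$ beating the factor $2^{i_m n_p}$ only if $N_p+1>n_p$, which holds; for $m' > m$ the finer bumps contribute at most $\norm{a}_1$ times their sup, small by lacunarity. Choosing the growth $4^m$ absorbs $m^{-2}$, so $\norm{\varphi*a_{i_m}}_p \ge c_*\Omega_k(2^{-i_m})$ up to renormalizing the growth; the statement's $c_*$ then comes from replacing $m^{-2}$ by choosing $\Omega_k(2^{-i_m})$ growing fast enough that the weights are harmless. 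Because $\varphi*a$ is supported in a fixed ball, $\norm{\cdot}_{\Hp}\gtrsim$ local $L^p$ norm of $\varphi*a$ minus $\norm{a}_{\Hp}\le C_{\mathrm{at}}$, giving the second divergence.

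For the single $f$, take $f := \sum_m 2^{-m}a_{i_m}$ (shifted so that the pieces $\varphi * a_{i_m}$ concentrate near disjoint $x_m$), which lies in $\hpn\cap L^1_{\mathrm{loc}}$ since $\sum 2^{-mp}<\infty$; then $\norm{\varphi*f}_p^p \ge \sum_m 2^{-mp}c_*^p\Omega_k(2^{-i_m})^p - \text{(cross terms)} = \infty$ because $\Omega_k(2^{-i_m})\ge 4^m$. The main obstacle is controlling the cross terms, both between different bumps of $\varphi$ and different atoms in $f$: I would handle it by placing the $m$-th bump and the $m$-th atom at a common centre $x_m$ with the $x_m$ separated by distances $\gg 2^{-i_{m}}$ but all in $B(0,1)$, so that on the ball $B(x_m, C2^{-i_m})$ only the diagonal term is large, and using the lacunarity $i_{m+1}\ge 2i_m$ to make all off-diagonal contributions there $o(\Omega_k(2^{-i_m})2^{i_m n_p}\cdot 2^{-i_m n_p})$. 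The final remarks ($k=N_p'$ giving necessity of $\mathcal R_p$, and the slice $t^\theta$) are immediate specializations.
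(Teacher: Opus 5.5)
There is a genuine gap, in the step you call bookkeeping. With your normalization the diagonal piece is
\[
  c'_m\,(\theta_m * a_{i_m})(x) \;=\; c\,c'_m\,2^{i_m n_p}\,(\theta * \Theta)\bigl(2^{i_m}(x - x_m)\bigr),
  \qquad c'_m := m^{-2}\,\omega(2^{-i_m})\,2^{-i_m k}.
\]
Its amplitude is indeed $\sim m^{-2}\,\Omega_k(2^{-i_m})$. But it is supported in a ball of radius $\sim 2^{-i_m}$, so its $L^p$ quasi-norm is $\sim m^{-2}\,\Omega_k(2^{-i_m})\,2^{-i_m n/p} = m^{-2}\,\omega(2^{-i_m})\,2^{-i_m(n+k)} \to 0$. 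The factors you list, $2^{i_m(n/p - n)}$ from the convolution and $2^{-i_m n/p}$ from the dilation, multiply to $2^{-i_m n}$; the extra factor $2^{i_m n/p}$ has no source.

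The off-diagonal terms are small by the very Taylor and lacunarity estimates you sketch. A coarser bump $m' < m$ contributes $\lesssim 2^{-p(i_m - i_{m'})(N_p + 1 - n_p)}\, 2^{-i_{m'} n p}$ to $\norm{\cdot}_p^p$, and a finer one $\lesssim 2^{-i_{m'} n p}$. Also $v * a_{i_m}$ is uniformly bounded by Lemma~\ref{lem:conv-atom}\textup{(iii)}. So for your $\varphi$ the quantities $\norm{\varphi * a_{i_m}}_p$ stay bounded, and the construction breaks at most the sup bound~\eqref{eq:unif-Linfty}. The $\Hp$ divergence and $\varphi * f \notin \Lpn$, which you derive from the $L^p$ lower bound, fall with it. No renormalization helps: membership in $C^{k,\omega}$ caps the amplitude of a bump at scale $2^{-i_m}$ at $\lesssim \omega(2^{-i_m})\,2^{-i_m k}$, and one rough spot per scale is invisible to $L^p$.

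The missing idea is that the roughness at scale $2^{-i_m}$ must fill a set of measure bounded below, so that $\varphi * a_{i_m}$ has amplitude $\Omega_k(2^{-i_m})$ on a set of unit size. The paper does this with the lacunary series $W(t) = \sum_m \omega(2^{-i_m})\,2^{-i_m k}\cos(2^{i_m} t)$ in $x_1$, cut off and tensored with a bump in $x'$. Tiling a unit cube by $\sim 2^{i_m n}$ disjoint translates of your bump would serve the same purpose. Then all scales interfere on the same set as the diagonal term, and controlling that interference is the real work. With atoms built from $g = h^{(N_p+1)}$, the paper proceeds in three steps:
\begin{itemize}
\item Parseval on $(-\pi,\pi)$ isolates the diagonal frequency and gives an $L^2$ lower bound $\gtrsim \omega(2^{-i_M})\,2^{-i_M k}$ for the $x_1$-profile.
\item The two-sided decay $\abs{\hat g(\xi)} \lesssim \min(\abs{\xi}^{N_p+1}, \abs{\xi}^{-1})$, together with the constraints that the slopes $\omega(2^{-i_m})\,2^{i_m}$ at least double and the values $\omega(2^{-i_m})$ at least halve along the sequence, gives a matching $L^\infty$ upper bound.
\item The inequality $\int \abs{F}^2 \le \norm{F}_\infty^{2-p} \int \abs{F}^p$ converts this pair into the $L^p$ lower bound.
\end{itemize}
Those geometric constraints are also what must replace your weights $m^{-2}$. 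A bound $c\, m^{-2}\, \Omega_k(2^{-i_m})$ does diverge, but it is not of the required form $c_*\, \Omega_k(2^{-i_m})$ with $c_*$ independent of $m$, and no growth condition on $\Omega_k(2^{-i_m})$ removes the factor. Finally, once $\varphi * a_{i_m}$ is spread over the support of $\varphi$, the atoms in your single $f$ cannot stay inside $B(0,1)$. They must be translated apart by more than the diameter of $\supp \varphi$ (the paper uses spacing $2(\pi + R + 2)$ along $x_1$), so that the pieces of $\varphi * f$ have disjoint supports.
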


\begin{proof}
Write $x = (x_1, x') \in \R \times \R^{n-1}$, $n = 1$ leaving the primed factors absent, and assume $\omega(t)/t$ unbounded as $t \to 0$, the remaining moduli being covered by the closing reduction. Fix $h \in C^\infty_c(\R)$ with $\supp(h) \subseteq [-1, 1]$, $h \ge 0$, $h \not\equiv 0$, set $g := h^{(N_p + 1)}$, fix a bump $0 \le h' \in C^\infty_c(\R^{n-1})$ with $\supp(h') \subseteq \overline{B'(0, 1)}$, $h' \not\equiv 0$, and define:
\[
  a_j(y) := c_0\, 2^{\,j n / p}\, g(2^j y_1)\, h'(2^j y'),
  \qquad j \ge 1,
\]
with $c_0$ normalizing $\norm{a_j}_\infty = \abs{B_j}^{-1/p}$. The support satisfies $\supp(a_j) \subseteq [-2^{-j}, 2^{-j}]^n \subseteq \overline{B_j}$, and the moments factorize: for $\abs{\nu} \le N_p$, the factor $\int y_1^{\nu_1}\, g(2^j y_1)\, dy_1$ vanishes by integration by parts, since $\nu_1 \le N_p$. Each $a_j$ is thus an $L^\infty$ $\hp$-atom on $B_j$, no moment conditions being asked of $h'$. With $\hat g(\xi) = (\mathrm{i} \xi)^{N_p + 1}\, \hat h(\xi)$ (one-dimensional transform), we record $\varrho_g := \abs{\hat g(1)} \ge \int h(u) \cos u\, du > 0$ (as $\supp(h) \subseteq [-1, 1] \subset (-\pi/2, \pi/2)$), together with $\abs{\hat g(\xi)} \le \norm{h}_1 \abs{\xi}^{N_p + 1}$ and $\abs{\hat g(\xi)} \le \norm{h^{(N_p + 2)}}_1 \abs{\xi}^{-1}$.

Choose scales $i_1 < i_2 < \dots$ inductively, each large enough that:
\[
  \Omega_k(2^{-i_m}) \ge m,
  \qquad
  \omega(2^{-i_m})\, 2^{i_m}
  \ge 2\, \omega(2^{-i_{m-1}})\, 2^{i_{m-1}},
  \qquad
  \omega(2^{-i_m}) \le \tfrac12\, \omega(2^{-i_{m-1}}) .
\]
The first is possible by the hypothesis and dyadic rounding, $\Omega_k$ changing by at most the factor $2^{n_p - k}$ between consecutive dyadic points by the monotonicity of $\omega$, the second because the slopes $\omega(2^{-i})\, 2^{i}$ are non-decreasing, $\omega(t) / t$ being non-increasing, and unbounded by assumption, and the third because $\omega(0^+) = 0$. Since the $i_m$ are strictly increasing integers, the frequencies $2^{i_m}$ form a lacunary sequence with gap ratio at least $2$, and we build a lacunary cosine series with these frequencies, the classical device for realizing a prescribed modulus of continuity~\cite[Chapter~V]{zygmund2002}. Fix $\chi \in C^\infty_c(\R)$ with $\chi \equiv 1$ on $[-\pi - 1, \pi + 1]$, $\supp(\chi) \subseteq [-\pi - 2, \pi + 2]$, and $\zeta \in C^\infty_c(\R^{n-1})$ with $\zeta \ge 0$, $\zeta \ge 1$ on $\overline{B'(0, 2)}$, and set:
\[
  \varphi_0(x) := \chi(x_1)\, W(x_1)\, \zeta(x'),
  \qquad
  W(t) := \sum_{m \ge 1} c_m \cos(2^{i_m} t),
  \qquad
  c_m := \omega(2^{-i_m})\, 2^{-i_m k} .
\]
Term-wise differentiation $l \le k$ times gives coefficients $\omega(2^{-i_m})\, 2^{-i_m(k-l)} \le \omega(2^{-i_m})$, geometrically summable by the third constraint, so $W \in C^k(\R)$. For the top derivative $W^{(k)}(t) = \sum_{m \ge 1} \omega(2^{-i_m}) \cos(2^{i_m} t + k\pi/2)$, split at $m(h) := \max\{m : 2^{-i_m} \ge \abs{h}\}$ for $0 < \abs{h} \le 1$ and bound each increment by $\min(2, 2^{i_m}\abs{h})$: the low terms $m \le m(h)$ sum geometrically by the second constraint to at most $2\,\omega(2^{-i_{m(h)}}) 2^{i_{m(h)}}\abs{h} \le 2\,\omega(\abs{h})$, using $\omega(t)/t$ non-increasing and $2^{-i_{m(h)}} \ge \abs{h}$, and the high terms $m > m(h)$ sum geometrically by the third constraint to at most $4\,\omega(2^{-i_{m(h)+1}}) \le 4\,\omega(\abs{h})$, using $2^{-i_{m(h)+1}} < \abs{h}$. Hence $\chi W \in C^{k,\omega}_c(\R)$, and tensoring with $\zeta$, mixed derivatives factoring, $\varphi_0 \in C^{k,\omega}_c(\R^n)$. By Lemma~\ref{lem:moment-correction}, $\varphi := \varphi_0 + v$ with $v \in C^\infty_c(\overline{B(0,1)})$ prescribing $\int \varphi = 1$, $\int x^\nu \varphi\, dx = 0$ for $0 < \abs{\nu} \le N_p$, gives $\varphi \in C^{k,\omega}_c(\R^n)$ satisfying $\operatorname{Cond}_G$.

On $\Omega := [-\pi, \pi] \times \overline{B'(0, \tfrac12)}$, substituting $u = 2^j y_1$, $u' = 2^j y'$ at $j = i_M$, using $2^{jn/p} \cdot 2^{-jn} = 2^{jn_p}$ and $\chi \equiv 1$ on $[-\pi,\pi]+[-1,1]$:
\[
  (\varphi_0 * a_{i_M})(x)
  \;=\; c_0\, 2^{\,i_M n_p}\, I_M(x_1)\, J_M(x'),
  \qquad
  I_M(x_1) = \sum_{m \ge 1} c_m\,
     \Re\Bigl( e^{\mathrm{i} 2^{i_m} x_1}\,
     \hat g\bigl( 2^{i_m - i_M} \bigr) \Bigr),
\]
where $J_M(x') := \int \zeta(x' - 2^{-i_M} u')\, h'(u')\, du' \in \bigl[ \norm{h'}_1,\, \norm{\zeta}_\infty \norm{h'}_1 \bigr]$ for $\abs{x'} \le \tfrac12$, the argument of $\zeta$ staying in $\overline{B'(0, 2)}$. Since the $2^{i_m}$ are distinct positive integers and $\{\cos(m\cdot), \sin(m\cdot)\}_{m \ge 1}$ are orthogonal in $L^2(-\pi,\pi)$ with norms $\sqrt\pi$, Parseval's identity gives $\norm{I_M}_{L^2(-\pi,\pi)}^2 \ge \pi\, \varrho_g^2\, c_M^2$, the term $m = M$. In the other direction, splitting $\sum_m c_m \abs{\hat g(2^{i_m - i_M})}$ at $m = M$ and applying the first recorded bound below and the second above, the terms $m \le M$ read $c_m\, 2^{(i_m - i_M)(N_p + 1)}\, \norm{h}_1$, a geometrically increasing sequence since consecutive ratios equal $\bigl[ \omega(2^{-i_{m+1}}) 2^{i_{m+1}} / \omega(2^{-i_m}) 2^{i_m} \bigr] \cdot 2^{(i_{m+1} - i_m)(N_p - k)} \ge 2$ by the second constraint and $k \le N_p$, while the terms $m > M$ read $c_m\, 2^{-(i_m - i_M)}\, \norm{h^{(N_p + 2)}}_1$, geometrically decreasing by the third constraint. Both sums are dominated by their term at $m = M$, so $\norm{I_M}_{L^\infty(-\pi, \pi)} \le C_1\, c_M$ with $C_1 = C_1(h)$. The interpolation inequality $\int \abs{F}^2 \le \norm{F}_\infty^{2 - p} \int \abs{F}^p$ on $(-\pi, \pi)$, Fubini on $\Omega$, and the identity $2^{\,i_M n_p}\, c_M = \omega(2^{-i_M})\, 2^{\,i_M (n_p - k)} = \Omega_k(2^{-i_M})$ then yield:
\begin{equation}
\label{eq:main-lower}
\norm{\varphi_0 * a_{i_M}}_{L^p(\Omega)}^p
\;\ge\; c_2^p\; \Omega_k(2^{-i_M})^p,
\qquad
c_2^p := \pi\, c_0^p\, \norm{h'}_1^p\,
   \abs{B'(0, \tfrac12)}\, \varrho_g^2\, C_1^{\,p - 2}
   > 0 .
\end{equation}
Since $v \in C^\infty_c \subseteq C^{\delta_p}_{\eta_M}$ for power majorants $\eta_M$, Lemma~\ref{lem:conv-atom}\textup{(iii)} gives $\norm{v * a_j}_p \le C_v$ uniformly in $j$, and the $p$-triangle inequality yields:
\[
\norm{\varphi * a_{i_M}}_p^p
\;\ge\; \norm{\varphi_0 * a_{i_M}}_{L^p(\Omega)}^p
   - \norm{v * a_{i_M}}_{L^p(\Omega)}^p
\;\ge\; c_2^p\, \Omega_k(2^{-i_M})^p - C_v^p
\;\longrightarrow\; \infty,
\]
proving the first assertion with any $c_* < c_2$, by the first constraint.

Each $a_j$ is an $L^\infty$ atom of $\Hpn$, so $\norm{a_j}_{\Hp} \le C_3(n,p)$ uniformly~\cite{latter1978}, \cite[Chapter~III]{garcia-cuerva-rubio1985}. If $\norm{a_{i_{M_l}} - \varphi * a_{i_{M_l}}}_{\Hp} \le K$ along a subsequence, then $\norm{\varphi * a_{i_{M_l}}}_{\Hp}^p \le C_3^p + K^p$, but $u_l := \varphi * a_{i_{M_l}}$ is continuous with compact support, so for $\Phi \in \mathcal S(\R^n)$ with $\int \Phi = 1$, dominated convergence gives $\Phi_t * u_l \to u_l$ pointwise as $t \downarrow 0$, whence $\abs{u_l} \le \sup_{t > 0} \abs{\Phi_t * u_l}$ and $\norm{u_l}_p \lesssim \norm{u_l}_{\Hp}$, bounding $\norm{\varphi * a_{i_{M_l}}}_{\Lp}$ uniformly and contradicting the divergence just proved. Hence $\norm{a_{i_M} - \varphi * a_{i_M}}_{\Hp} \to \infty$.

For the single distribution, fix $R$ with $\supp(\varphi) \subseteq B(0, R)$, set $L_0 := 2(\pi + R + 2)$, $x_m := (m L_0, 0, \dots, 0)$, and $f := \sum_{m \ge 1} \kappa_m\, a_{i_m}(\cdot - x_m)$, $\kappa_m := \Omega_k(2^{-i_m})^{-1} m^{-1/p}$: by the first constraint $\Omega_k(2^{-i_m})^{-1} \le m^{-1}$, so $\sum_m \kappa_m^p \le \sum_m m^{-1-p} < \infty$ and $f \in \hpn$ by~\eqref{eq:atom-hp-bound} and $p$-sub-additivity. The supports being pairwise disjoint and locally finite, the series also converges pointwise, $f \in L^1_{\mathrm{loc}}(\R^n)$, and $\varphi * f$ is defined everywhere as an absolutely convergent integral. By the choice of $L_0$, the supports of the $\varphi * a_{i_m}(\cdot - x_m)$ are pairwise disjoint, each meeting no window $x_{m'} + \Omega$ with $m' \ne m$ (separation in the first coordinate), and the windows are pairwise disjoint. Hence, $\varphi * f$ coincides on $x_m + \Omega$ with the single term $\kappa_m\, (\varphi * a_{i_m})(\cdot - x_m)$, and by translation invariance, \eqref{eq:main-lower}, and the uniform bound $C_v$ above:
\[
  \norm{\varphi * f}_p^p
  \;\ge\; \sum_{m \ge 1} \kappa_m^p
     \Bigl( c_2^p\, \Omega_k(2^{-i_m})^p - C_v^p
     \Bigr)
  \;\ge\; c_2^p \sum_{m \ge 1} m^{-1}
     \;-\; C_v^p \sum_{m \ge 1} \kappa_m^p
  \;=\; \infty .
\]
Hence $\varphi * f \notin \Lpn$.

Finally, if $\omega(t) \le Ct$, then $\sup_t \Omega_k(t) = \infty$ forces $n_p - k > 1$, hence $k + 1 \le N_p'$, and a witness of the proposition at order $k+1$ and modulus $t^\theta$, $0 < \theta < n_p - k - 1$, for which $\omega(t)/t$ is unbounded, lies in $C^{k+1,\theta}_c \subseteq C^{k,\omega}_c$ up to the constant $\omega(1)^{-1}$, its order-$k$ derivatives Lipschitz by compact support, and its failure of (G) is failure for $C^{k,\omega}_c$.
\end{proof}

\begin{proposition}
\label{prop:decay-endpoint-fails} Let $0 < p \le 1$ and let $\eta$ be a majorant with $\int_1^\infty \eta(s)\, s^{N_p + n - 1}\, ds < \infty$ and $\eta \notin \mathcal D_p$. There exist $\varphi \in C^\infty(\R^n) \cap C^{\delta_p}_{\eta}(\R^n)$, satisfying $\operatorname{Cond}_G$ and as such a member of every requirement of~\eqref{eq:admiss-p} except that its majorant falls outside $\mathcal D_p$, and a single large-ball $L^\infty$ $\hp$-atom $a$, such that Goldberg's splitting fails at $f = a$: through its $L^p$-clause, $\varphi * a \notin \Lpn$, when the fractional integral of~\eqref{eq:majorant-conditions} diverges, and through its $\Hp$-clause, $a - \varphi * a \notin \Hpn$, when the fractional integral converges while the iterated one diverges, which occurs only at integer $n_p$. This failure constitutes the necessity of the decay condition in Theorem~\ref{thm:admiss-characterization}, at both parities, and majorants with divergent moment integral admit no members of $\operatorname{Cond}_G$ with absolutely convergent normalization at all.
\end{proposition}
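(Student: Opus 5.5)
The plan is to build $\varphi$ directly out of $\eta$ so that $\abs{\varphi}$ is comparable to $\eta(\abs{x})$ from below on large annuli, and to test Goldberg's splitting against one fixed large atom, for instance $a := \abs{B(0,1)}^{-1/p}\one_{B(0,1)}$. No moments are required of this atom, since its radius is $1$.

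\textbf{Construction of $\varphi$.} First regularize the majorant.
\begin{itemize}
\item Set $\varphi_0 := \rho * \bigl(\eta(\abs{\cdot}+1)\,\theta\bigr)$, with $\rho \ge 0$ a mollifier supported in $B(0,1)$ with $\int\rho = 1$, and $\theta$ a bounded, smooth, positive angular weight described below.
\item Monotonicity of $\eta$ gives $\eta(\abs{x}+2) \lesssim \varphi_0(x) \lesssim \eta(\abs{x})$, and every derivative of $\varphi_0$ is $\lesssim \eta(\abs{x})$. Hence $\varphi_0 \in C^\infty \cap C^{\delta_p}_{\eta}$.
\item Replacing $\eta$ by $\eta(\cdot+2)$ changes neither the convergence of $\int_1^\infty \eta(s) s^{N_p+n-1}\,ds$ nor the divergence of either integral in~\eqref{eq:majorant-conditions}, because $s+2 \asymp s$ for $s \ge 1$.
\item The hypothesis $\int_1^\infty \eta(s) s^{N_p+n-1}\,ds < \infty$ makes every moment of $\varphi_0$ of order at most $N_p$ absolutely convergent. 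Since $\varphi_0 > 0$, we can normalize it and add the correction $v$ of Lemma~\ref{lem:moment-correction}, which has compact support, to reach $\operatorname{Cond}_G$. This does not affect membership in $C^{\delta_p}_{\eta}$, because $\eta \ge \eta(1)$ on $B(0,1)$.
\item The closing remark of the statement is immediate: if $\int \eta(s) s^{n-1}\,ds = \infty$, then no $\varphi$ dominated by $\eta$ needs to have an absolutely convergent normalization, and conversely one checks this is the only obstruction.
\end{itemize}

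\textbf{Fractional case.} Here $\varphi*a \ge 0$ up to the harmless term $v*a$, which has compact support. For $\abs{x}\ge 2$ we have $\varphi_0*a(x) \gtrsim \eta(\abs{x}+3)$. Polar coordinates then give
$$\int_{\abs{x}\ge 2}\abs{\varphi*a}^p \;\gtrsim\; \int_2^\infty \bigl(\eta(s+3)\,s^{n/p}\bigr)^p\,\frac{ds}{s} \;=\; \infty,$$
so $\varphi*a\notin \Lpn$.

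\textbf{Integer case.} Here the fractional integral converges and the iterated one diverges. By Lemma~\ref{lem:conv-atom}(iii), $\varphi*a \in \Lpn$, so the failure must come from the $\Hp$-clause for $g := a-\varphi*a$.
\begin{itemize}
\item The function $g$ has vanishing moments up to $N_p = n_p$, by the binomial computation of Lemma~\ref{lem:goldberg-extension}(i).
\item The weight $\theta$ is chosen to break symmetry: $\theta := 1+\psi(x/\abs{x})$ with $\psi\ge 0$ supported in a small cap around $\mathbf e_1$. Then for $\nu_0 := (n_p,0,\dots,0)$ the tail moment satisfies
$$T(t) := \int_{\abs{y}>t} g(y)\,y^{\nu_0}\,dy \;\asymp\; -E_\eta(t) \quad (t\ge 2),$$
with the same sign for all large $t$. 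A radial $\varphi$ would kill this moment at odd $n_p$, which is why $\theta$ is needed.
\item As in Proposition~\ref{prop:log-sharp}, take $\Phi$ with $\partial^{\nu_0}\Phi \ne 0$ on a cap $\Omega$ and set $t = \abs{x}$ with $x/\abs{x} \in \Omega$. Split $g = g\one_{\abs{y}<\epsilon t} + g\one_{\abs{y}\ge\epsilon t}$.
\item On the near part, Taylor's formula for $\Phi_t$ at $x$ up to order $n_p$, together with the vanishing global moments, turns the order-$\le n_p$ partial moments into minus the tail moments at $\epsilon t$. The top one gives the main term $\asymp t^{-n/p}E_\eta(\epsilon t)$. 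The lower-order ones are $\lesssim t^{-n-\abs{\mu}}(\epsilon t)^{\abs{\mu}-n_p}E_\eta(\epsilon t)$, which carry a factor $\epsilon^{n_p-\abs{\mu}}$ relative to the main term. The remainder is $O(\epsilon)$ times the main term.
\item Once the main term dominates, we get $\sup_t\abs{\Phi_t*g(x)} \gtrsim \abs{x}^{-n/p}E_\eta(\epsilon\abs{x})$ on a cone. Integrating over that cone gives
$$\norm{g}_{\Hp,\max}^p \;\gtrsim\; \int^\infty E_\eta(\epsilon t)^p\,\frac{dt}{t} \;=\; \infty.$$
\end{itemize}

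\textbf{Main obstacle.} The hard step is the far part $g\one_{\abs{y}\ge\epsilon t}$. The crude bound is $t^{-n}\int_{\epsilon t}^\infty \eta\, s^{n-1}\,ds \lesssim \epsilon^{-n_p}t^{-n/p}E_\eta(\epsilon t)$. This has the same size as the main term and the wrong $\epsilon$-dependence, so it cannot simply be absorbed. What I would try first is to exploit signs and supports.
\begin{itemize}
\item Pick $\Phi$ supported in a small ball $B(\mathbf e_1,\tfrac14)$, and evaluate at points $x = -t\omega$ with $\omega$ in a cap where $\varphi$ carries no extra weight.
\item Then $\Phi_t(x-y)\ne 0$ forces $y$ to lie near $-t\omega - t\mathbf e_1$, a region where the far part of $g$ equals $-\varphi_0$, is positive, and is explicitly comparable to $\eta(\abs{y})$.
\item The far contribution would then have a definite sign. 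Choosing the sign of $\partial^{\nu_0}\Phi$ (equivalently, the side of the cap carrying $\psi$) should make it add to the main term rather than cancel it.
\end{itemize}
If this sign bookkeeping fails, the fallback is to replace $\Phi_t$ by a smooth grand-maximal test function adapted so that its far contribution integrates to a quantity dominated by $E_\eta(\epsilon t)$ with a small constant.
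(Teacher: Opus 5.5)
The construction of $\varphi$ and the fractional case are fine, up to harmless shifts in the argument of $\eta$. They agree in substance with the paper, which regularizes radially through $u(t)=\int_1^2\eta(ts)\vartheta(s)\,ds/s$ and breaks the parity symmetry with a degree-$n_p$ spherical harmonic instead of a cap weight. The gap is the integer case, and it is not confined to the far part you flag. Write $N^\mu(\tau):=\int_{\abs{y}\ge\tau}y^\mu g(y)\,dy$ and $h(s):=\eta(s)s^{n/p}$.

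\emph{Lower orders.} In the near part you have the exponent of $\epsilon$ reversed: $t^{-n-\abs{\mu}}(\epsilon t)^{\abs{\mu}-n_p}E_\eta(\epsilon t)=\epsilon^{-(n_p-\abs{\mu})}\,t^{-n/p}E_\eta(\epsilon t)$. So the lower-order terms exceed the main term by $\epsilon^{-(n_p-\abs{\mu})}$, and your far-part bound is just the case $\abs{\mu}=0$ of the same loss.

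\emph{Remainder.} It is not $O(\epsilon)$ times the main term. It is of order $t^{-n/p}\bigl(t^{-1}+\epsilon A(\epsilon t)\bigr)$ with $A(\tau):=\tau^{-1}\int_4^\tau h(s)\,ds$, an average of $h$ over scales below $\tau$. This is not dominated by the tail $E_\eta(\tau)$ when $\eta$ drops steeply just past $\tau$, so no choice of $\epsilon$ closes the argument scale by scale.

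\emph{Sign bookkeeping.} It does not help. If $\Phi\ge0$, the far contribution is $\le0$, while the zeroth-order near term $-\Phi_t(x)N^0(\epsilon t)$ is $\ge0$. Over $\epsilon t\le\abs{y}\ll t$ the two cancel to leading order, since $\Phi_t(x-y)\approx\Phi_t(x)$ there. At $p=1$, where $n_p=0$ and that zeroth-order term is your main term, the far part necessarily opposes it.

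\emph{Top order.} Your cap weight makes every top-order tail moment $N^\mu$, $\abs{\mu}=n_p$, of size $E_\eta$. The main coefficient is therefore a combination $\sum_{\abs{\mu}=n_p}c_\mu\,\partial^\mu\Phi(x/\abs{x})/\mu!$, not $\partial^{\nu_0}\Phi$ alone, and it must be arranged non-zero.

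The missing idea is that here the \emph{fractional} integral of~\eqref{eq:majorant-conditions} converges, and it pays for every error on average over scales rather than scale by scale. The statistics $h(t/2)$, $A(t)$ and $P_j(t):=t^{n_p-j}\int_t^\infty\eta(s)s^{j+n-1}\,ds$, $0\le j<n_p$, all lie in $L^p(dt/t)$, whereas $E_\eta\notin L^p(dt/t)$. Dyadically they are discrete convolutions of the $\ell^p$ sequence $(h(2^l))_l$ with geometric kernels; this is~\eqref{eq:error-statistics}.

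The paper uses this as follows:
\begin{enumerate}[(a)]
\item It tests at $\abs{x}\le c_0t$, i.e.\ at scales much larger than $\abs{x}$, and splits at $\abs{y}=t$. The far field is then simply $\norm{\Phi}_1\sup_{\abs{y}>t}\abs{g}\lesssim t^{-n/p}h(t/2)$, and the lower orders are $\lesssim t^{-n/p}P_{\abs{\nu}}(t)$.
\item It takes $\Phi$ with $\partial^{\nu_0}\Phi(0)=1$ and the other top-order derivatives vanishing at $0$.
\item It restricts to the set $G$ of scales on which all these statistics, and the moment-transfer errors, are at most $\epsilon_0E_u(t)$. There the top-order term wins, while $\int_{G^c}E_u^p\,dt/t$ is at most $\epsilon_0^{-p}$ times a finite quantity, so $\int_GE_u^p\,dt/t=\infty$.
\end{enumerate}
Your setup at $t=\abs{x}$ with a fixed $\epsilon$ could be salvaged the same way, the factors $\epsilon^{-(n_p-j)}$ then being constants in front of summable statistics. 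Without this good-scale selection, however, the integer case does not close.
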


\begin{proof}
Since $\eta \notin \mathcal D_p$, either the fractional integral of~\eqref{eq:majorant-conditions} diverges, treated first, or it converges while the iterated one diverges, only possible at integer $n_p$. Fix $\vartheta \in C^\infty_c((1, 2))$, $\vartheta \ge 0$, $\int_1^2 \vartheta(s)\, \tfrac{ds}{s} = 1$, and $u(t) := \int_1^2 \eta(ts)\, \vartheta(s)\, \tfrac{ds}{s} = \int_t^{2t} \eta(\sigma)\, \vartheta(\sigma/t)\, \tfrac{d\sigma}{\sigma}$: smooth on $(0,\infty)$, every $t$-derivative falling on $\vartheta(\sigma/t)$, with $\eta(2t) \le u(t) \le \eta(t)$ and $\abs{u^{(k)}(t)} \le C_k\, \eta(t)\, t^{-k}$ by monotonicity on $[t,2t]$. In particular:
\begin{equation}
\label{eq:inherited-divergence}
  E_u(t) \;\ge\; 2^{-n/p}\, E_\eta(2t),
\end{equation}
so $u$ inherits whichever divergence $\eta$ carries. Fix a solid spherical harmonic $Y$ of degree $n_p$, normalized in $L^2(S^{n-1})$, a smooth $\varsigma$ vanishing on $B(0, 2)$ and $\equiv 1$ off $B(0, 4)$, and set:
\[
  \varphi := \varphi_{\mathrm{rad}}
     + \kappa\, \varphi_{\mathrm{ang}} + v,
  \qquad
  \varphi_{\mathrm{rad}}(y) := u(\abs{y})\, \varsigma(y),
  \qquad
  \varphi_{\mathrm{ang}}(y) := Y\bigl( y / \abs{y} \bigr)\,
     u(\abs{y})\, \varsigma(y),
\]
with $\kappa := 0$ in the first case and $\kappa \in \{1, 2\}$ chosen in the second, and $v \in C^\infty_c(\overline{B(0, 1)})$ prescribing, by Lemma~\ref{lem:moment-correction}, $\int \varphi = 1$ and $\int y^\nu \varphi\, dy = 0$ for $0 < \abs{\nu} \le N_p$ (for $n_p = 0$, set $\varphi_{\mathrm{ang}} := 0$). All moment integrals converge absolutely, the top order dominated by the moment hypothesis and the lower orders by the pointwise bound $\int_t^\infty \eta(s)\, s^{\,j + n - 1}\, ds \le t^{\,j - n_p}\, E_\eta(t)$ at $t = 1$, and each piece lies in $C^\infty \cap C^{\delta_p}_{\eta}$, the derivatives of $u(\abs{y})\, \varsigma$ and of the degree zero homogeneous $Y(y / \abs{y})$ being bounded by $C\, \eta(\abs{y})\, (1 + \abs{y})^{-\abs{\nu}}$ off $B(0, 2)$. Thus $\varphi$ satisfies all the stated conditions. Let $a := \abs{B(0, 1)}^{-1/p} \one_{B(0, 1)}$, of mass $m_a = \omega_n^{1 - 1/p}$.

In the first case, the failure lies in the $L^p$-clause: with $\kappa = 0$, $\varphi - v = \varphi_{\mathrm{rad}} \ge 0$, equal to $u(\abs{y})$ for $\abs{y} \ge 4$, and for $\abs{x} \ge 8$, $\abs{y} \le 1$, monotonicity gives $(\varphi_{\mathrm{rad}} * a)(x) \ge m_a\, u(\abs{x}+1) \ge m_a\, \eta(4\abs{x})$, while $v * a$ is supported in $B(0,2)$. In polar coordinates, after $\sigma = 4s$:
\[
  \int_{\R^n} \abs{\varphi * a}^p
  \;\ge\; m_a^p \int_{\abs{x} \ge 8}
     \eta(4 \abs{x})^p\, dx
  \;=\; c \int_{32}^\infty
     \bigl( \eta(\sigma)\, \sigma^{n/p} \bigr)^p\,
     \frac{d\sigma}{\sigma}
  \;=\; \infty ,
\]
by the assumed divergence of the fractional integral, a shift of the lower limit not affecting it. As $a \in \hpn$, the $L^p$-clause of Goldberg's splitting fails at $f = a$.

In the second case, the failure migrates to the $\Hp$-clause. The fractional integral converges, $h(s) := \eta(s)\, s^{n/p}$ satisfying $\int_1^\infty h(s)^p \tfrac{ds}{s} < \infty$, while $\int_1^\infty E_u(t)^p \tfrac{dt}{t} = \infty$ by~\eqref{eq:inherited-divergence} and $n_p \in \Z$, so $N_p = n_p$. Set $g := a - \varphi * a$, bounded with $\abs{g(x)} \le C\, \eta(\abs{x}/2)$ for $\abs{x} \ge 8$ by~\eqref{eq:sep-majorant} and $\int g\, y^\nu\, dy = 0$ for $\abs{\nu} \le n_p$ by the binomial identity.

The product structure makes the top order tail moments of $\varphi$ exact multiples of one radial function: for $t \ge 8$ and $\abs{\nu} = n_p$, $\varsigma \equiv 1$ there and $E_u(t) = \int_t^\infty u(s)\, s^{\,n_p+n-1}\, ds$, so:
\[
  \int_{\abs{w} \ge t} w^\nu\, \varphi(w)\, dw
  \;=\; c_\nu\, E_u(t),
  \qquad
  c_\nu := \int_{S^{n-1}} \omega^\nu\, d\sigma
     + \kappa \int_{S^{n-1}} \omega^\nu\, Y(\omega)\,
     d\sigma .
\]
The vector $\bigl( \int \omega^\nu Y\, d\sigma \bigr)_{\abs{\nu} = n_p}$ is non-zero: $Y|_{S^{n-1}}$ is, up to normalization, the harmonic component of the degree-$n_p$ homogeneous polynomials~\cite[Chapter~IV]{steinweiss1971}, hence a combination of the $\omega^\mu$, $\abs{\mu} = n_p$, whose joint annihilation would force $\int Y^2\, d\sigma = 0$, so at most one $\kappa$ annihilates $(c_\nu)$: choose $\kappa \in \{1, 2\}$ with $(c_\nu) \ne 0$ and fix $\nu_0$ with $c_{\nu_0} \ne 0$ (for $n_p = 0$: $\nu_0 = 0$, $c_0 = \abs{S^{n-1}} > 0$). Transferring through the binomial expansion of $(w + z)^\nu$, $\abs{z} \le 1$, the boundary annulus $t - 1 \le \abs{w} \le t + 1$ contributing at most $C\, h(t / 2)$ and the lower binomial orders at most $C\, t^{-1} E_\eta(t)$ by the pointwise bound of Step 0:
\[
  N^\nu(t) := \int_{\abs{y} \ge t} y^\nu\, g(y)\, dy
  \;=\; -\, m_a\, c_\nu\, E_u(t)
  \;+\; O\bigl( h(t / 2) + t^{-1} E_\eta(t) \bigr),
  \qquad \abs{\nu} = n_p .
\]

Set $A(t) := t^{-1} \int_4^t h(s)\, ds$ and, for $0 \le j < n_p$, $P_j(t) := t^{\,n_p-j} \int_t^\infty \eta(s)\, s^{\,j+n-1}\, ds$: then
\begin{equation}
\label{eq:error-statistics}
\int_1^\infty h(t / 2)^p\, \frac{dt}{t}
\;+\; \int_1^\infty A(t)^p\, \frac{dt}{t}
\;+\; \sum_{0 \le j < n_p}
   \int_1^\infty P_j(t)^p\, \frac{dt}{t}
\;<\; \infty :
\end{equation}
the first integral is the fractional condition, and by monotonicity $A(2^k) \le C \sum_{l \le k} h(2^{l-1})\, 2^{\,l - k}$ and $P_j(2^k) \le C \sum_{l \ge k} h(2^l)\, 2^{-(l - k)(n_p - j)}$, discrete convolutions of the $\ell^p$ sequence $\bigl( h(2^l) \bigr)$ with geometric sequences, hence in $\ell^p$ by $p$-sub-additivity, the dyadic sums comparable to the integrals.

The functionals $\Phi \mapsto \int \Phi$, $\Phi \mapsto \partial^\nu \Phi(0)$ ($\abs{\nu} = n_p$) are linearly independent, hence jointly surjective on $\mathcal S(\R^n)$: fix $\Phi$ with $\int \Phi = 1$, $\partial^{\nu_0} \Phi(0) = 1$, $\partial^\nu \Phi(0) = 0$ for the other $\abs{\nu} = n_p$. Suppose $g \in \Hp$, so that $\norm[\big]{\sup_{t > 0} \abs{\Phi_t * g}}_p \lesssim \norm{g}_{\Hp} < \infty$ by the maximal characterization with a single test function~\cite[Theorem~11]{feffermanstein1972}. For $t \ge C_0$, with $C_0 \ge 8$ a threshold fixed at the end of the proof, and $\abs{x} \le c_0 t$, $c_0 \in (0, 1)$, subtracting the degree $n_p$ Taylor polynomial of $y \mapsto \Phi_t(x - y)$ at $y = 0$ against the vanishing moments of $g$:
\[
  (\Phi_t * g)(x)
  = \int_{\abs{y} \le t} R_{t, x}\, g\, dy
  + \int_{\abs{y} > t} \Phi_t(x - y)\, g\, dy
  - \sum_{\abs{\nu} \le n_p}
     \frac{(-1)^{\abs{\nu}}}{\nu!}\,
     \partial^\nu \Phi_t(x)\, N^\nu(t) .
\]
The remainder is at most $C\, t^{-n - n_p} \bigl( t^{-1} + A(t) \bigr)$, via the bound $\abs{R_{t, x}(y)} \le C_\Phi\, t^{-n - n_p - 1} \abs{y}^{n_p + 1}$ and the identity $s^{\,n_p + n}\, \eta(s) = h(s)$, which collapses the integrand of $\int_{\abs{y} \le t} \abs{y}^{n_p + 1} \abs{g}\, dy$. The far field is at most $\norm{\Phi}_1 \sup_{\abs{y} > t} \abs{g} \le C\, t^{-n - n_p}\, h(t / 2)$. The lower orders $\abs{\nu} < n_p$ carry $\abs{N^\nu(t)} \le C\, t^{\,\abs{\nu} - n_p}\, P_{\abs{\nu}}(t)$ against the bound $\abs{\partial^\nu \Phi_t} \le t^{-n - \abs{\nu}} \norm{\partial^\nu \Phi}_\infty$, totalling $C\, t^{-n - n_p} \sum_j P_j(t)$. At top order, $\partial^\nu \Phi_t(x) = t^{-n - n_p} (\partial^\nu \Phi)(x / t)$, and the prescribed derivatives with the mean value theorem on $\abs{x / t} \le c_0$ deviate from their values at $0$ by at most $\norm{\nabla \partial^\nu \Phi}_\infty\, c_0$. Set $\varkappa := m_a \abs{c_{\nu_0}} / \nu_0!$ and:
\[
  G \;:=\; \Bigl\{ t \ge C_0 \;:\;
     h(t / 2) + A(t) + \sum_{0 \le j < n_p} P_j(t)
     \;\le\; \epsilon_0\, E_u(t) \Bigr\} .
\]
Choosing $c_0$ so small that the off $\nu_0$ top order contributions cost at most $\tfrac14 \varkappa$, then $\epsilon_0$ so small and $C_0$ so large that on $G$ the remainder, far field, lower orders, and the errors of the moment transfer total at most $\tfrac14 \varkappa\, t^{-n - n_p} E_u(t)$, we get $\abs{(\Phi_t * g)(x)} \ge \tfrac14 \varkappa\, t^{-n - n_p} E_u(t)$ for $t \in G$, $\abs{x} \le c_0 t$. Taking $t = 2\abs{x} / c_0$ over the annuli with good scales, and using $(n + n_p)\, p = n$:
\[
  \norm[\Big]{ \sup_{t > 0} \abs{\Phi_t * g} }_p^{\,p}
  \;\gtrsim\; \int_G E_u(t)^p\, \frac{dt}{t}
  \;=\; \int_{C_0}^\infty E_u(t)^p\, \frac{dt}{t}
     \;-\; \int_{G^c} E_u(t)^p\, \frac{dt}{t}
  \;=\; \infty ,
\]
the first integral divergent and the second finite, since on $G^c$ at least one statistic exceeds $\epsilon_0 E_u(t)$ and~\eqref{eq:error-statistics} bounds it by $\epsilon_0^{-p}$ times a finite quantity. This divergence contradicts $g \in \Hp$, so $a - \varphi * a \notin \Hpn$. Concluding the proof.
\end{proof}

\begin{proposition}
\label{prop:cond-g-sharp}
Let $0 < p \le 1$, let $\omega$ be a modulus and $\eta$ a majorant with $\int_1^\infty \eta(s)\, s^{N_p+n-1}\, ds < \infty$, and fix a multi-index $\alpha$ with $0 \le \abs\alpha \le N_p$. Let $\varphi \in C^{\omega}_{\eta}(\R^n)$ (respectively $\varphi \in L^\infty_{\eta}(\R^n)$ at $p = 1$, where $N_1 = 0$ and only $\alpha = 0$ arises) satisfy every condition of $\operatorname{Cond}_G$ except at order $\alpha$: that is, $\int \varphi \, dx \ne 1$ if $\alpha = 0$, or $\int x^\alpha \varphi(x) \, dx \ne 0$ if $1 \le \abs\alpha \le N_p$, with every other condition of $\operatorname{Cond}_G$ holding. Then there is $f \in \mathcal S(\R^n) \subset \hpn$, independent of $\varphi$, of $\alpha$, and of $(\omega, \eta)$, at which Goldberg's splitting fails.
\end{proposition}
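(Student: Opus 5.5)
The plan is to take $f$ to be the normalized Gaussian $f(x) := \pi^{-n/2} e^{-\abs{x}^2}$. It lies in $\mathcal S(\R^n) \subset \hpn$, satisfies $\int f = 1$, and does not depend on $\varphi$, $\alpha$, or $(\omega, \eta)$. We will show that the $\Hp$-clause fails at this $f$, namely that $g := f - \varphi * f \notin \Hpn$.

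\emph{Step 1: the moments of $g$.} The hypothesis $\int_1^\infty \eta(s)\, s^{N_p + n - 1}\, ds < \infty$ gives $\int (1 + \abs{x})^{N_p} \abs{\varphi} < \infty$. Hence $\varphi * f$ is a genuine convolution, $g$ is bounded and continuous with $\int (1 + \abs{x})^{N_p} \abs{g} < \infty$, and Fubini applies. Write $m_\beta(\cdot)$ for the $\beta$-th moment. The binomial identity used in Lemma~\ref{lem:goldberg-extension} gives, for $\abs{\beta} \le N_p$,
\[
  m_\beta(g) = m_\beta(f) - \sum_{\gamma \le \beta} \binom{\beta}{\gamma}\, m_\gamma(f)\, m_{\beta - \gamma}(\varphi).
\]
By hypothesis, $m_\mu(\varphi) = \delta_{\mu 0}$ for every $\abs{\mu} \le N_p$ except $\mu = \alpha$. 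If $\alpha = 0$, then $m_0(g) = 1 - \int \varphi \ne 0$. If $\alpha \ne 0$, then every $\beta$ with $\abs{\beta} < \abs{\alpha}$, or with $\abs{\beta} = \abs{\alpha}$ and $\beta \ne \alpha$, has $m_\beta(g) = 0$. The one surviving term gives $m_\alpha(g) = -m_0(f)\, m_\alpha(\varphi) = -\int x^\alpha \varphi \ne 0$. So in both cases $g$ has vanishing moments below order $k := \abs{\alpha} \le N_p \le n_p$, and its degree-$k$ moment polynomial $P(\xi) := \sum_{\abs{\beta} = k} m_\beta(g)\, \xi^\beta / \beta!$ is non-zero.

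\emph{Step 2: a lower bound for the maximal function.} We mimic the lower-bound argument of Propositions~\ref{prop:log-sharp} and~\ref{prop:decay-endpoint-fails}. First fix $\Phi \in \mathcal S(\R^n)$ with $\int \Phi = 1$ such that the homogeneous polynomial $Q(\omega) := \sum_{\abs{\beta} = k} \frac{(-1)^k}{\beta!}\, m_\beta(g)\, \partial^\beta \Phi(\omega)$ is non-zero at some point of $S^{n-1}$. This is possible because the functionals $\Phi \mapsto \partial^\beta \Phi(\mathbf e_1)$ and $\Phi \mapsto \int \Phi$ are independent; then $\abs{Q} \ge c > 0$ on a cap $\Omega$. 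Next, for $x$ with $x/\abs{x} \in \Omega$, take $t = \abs{x}$ and expand $\Phi_t(x - y)$ in $y$ about $0$ to order $k$. The lower moments of $g$ vanish, so
\[
  (\Phi_t * g)(x) = t^{-n-k}\, Q(x/t) + \text{error}.
\]
The error is split, as in Proposition~\ref{prop:decay-endpoint-fails}, into a Taylor remainder on $\abs{y} \le \epsilon t$, a far field on $\abs{y} > \epsilon t$, and the tail moments $\int_{\abs{y} > \epsilon t} \abs{y}^{k} \abs{g}$. Each piece is $o(t^{-n-k})$ as $t \to \infty$: the remainder uses $\int_{\abs{y} \le \epsilon t} \abs{y}^{k+1} \abs{g} = o(t)$, which follows from the finite $k$-th absolute moment by dominated convergence; the far field is controlled by $\sup_{\abs{y} > \epsilon t} \abs{g} \lesssim \eta(\epsilon t / 2) + e^{-c t^2}$ together with $\eta(s) = o(s^{-n - N_p})$, which follows from monotonicity and the moment hypothesis. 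Consequently $\sup_{t > 0} \abs{\Phi_t * g(x)} \ge \tfrac{c}{2}\, \abs{x}^{-n-k}$ on the cone over $\Omega$, for $\abs{x} \ge R_0$.

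\emph{Step 3: divergence.} Since $p(n + k) \le p(n + n_p) = n$, the function $\abs{x}^{-p(n+k)}$ is not integrable on that cone: the integral is a power divergence if $k < n_p$, and a logarithmic divergence when $k = n_p$ (integer case). So $\sup_{t > 0} \abs{\Phi_t * g} \notin \Lpn$. By the single-test-function maximal characterization~\cite{feffermanstein1972}, $g \notin \Hpn$, and Goldberg's splitting fails at $f$.

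The main obstacle is the error control in Step 2 in the borderline case $k = N_p = n_p$. There only the finiteness of the top absolute moment of $\varphi$ is available, so we cannot rely on a power margin; instead we must use the $o(\cdot)$ statements from dominated convergence, uniformly over the cone. We do not need any $\ell^p$-Dini information, because the main term is a fixed non-zero multiple of $\abs{x}^{-n/p}$ rather than a tail integral.
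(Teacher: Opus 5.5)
Your proof is correct, and it differs from the paper's at the decisive step.

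The paper takes $f := \check\chi$ with $\widehat f \equiv 1$ near the origin. Leibniz's rule then isolates $\partial^\alpha \widehat g(0)$, hence $\int x^\alpha g$, as a non-zero multiple of the violated condition, without using the remaining conditions of $\operatorname{Cond}_G$. The paper concludes in one line by appealing to the fact that an element of $\Hp$ with finite absolute moments up to order $N_p$ has all those moments vanishing.

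You instead take a Gaussian, whose even moments are non-zero. So you must use the other conditions of $\operatorname{Cond}_G$ in the binomial identity, and you do so correctly: the moments of $g$ vanish below order $k = \abs{\alpha}$ and $m_\alpha(g) = -\int x^\alpha \varphi \neq 0$. You then prove non-membership directly, via the lower bound $\sup_{t>0}\abs{\Phi_t * g(x)} \gtrsim \abs{x}^{-n-k}$ on a cone. This is not $p$-integrable because $p(n+k) \le p(n+n_p) = n$.

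The paper's route is shorter, but the fact it invokes is not elementary at the endpoint $\abs{\alpha} = n_p \in \Z$. There the standard bound $\abs{\widehat g(\xi)} \lesssim \abs{\xi}^{n_p}\norm{g}_{\Hp}$ is compatible with a non-zero homogeneous Taylor term of degree $n_p$. Excluding it takes exactly your logarithmically divergent maximal-function estimate, or a Hardy-type inequality. Your argument supplies that step self-containedly, in the style of Propositions~\ref{prop:log-sharp} and~\ref{prop:decay-endpoint-fails}. It uses only the finite $N_p$-th moment hypothesis on $\eta$, through $\eta(s) = o(s^{-n-N_p})$ and dominated convergence for the tail moments, and needs no $\ell^p$-Dini information.

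Your method would also work without the remaining conditions of $\operatorname{Cond}_G$: run it at the lowest order at which some moment of $g$ fails to vanish.

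Two cosmetic points. Your $Q$ is a function on $S^{n-1}$ built from derivatives of $\Phi$, not a homogeneous polynomial. The polynomial $P$ you define is never used.
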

\begin{proof}
The decay hypothesis on $\eta$ gives $x^\nu \varphi \in L^1(\R^n)$ for every $\abs\nu \le N_p$, in either case $C^\omega_\eta$ or $L^\infty_\eta$, so all moments of $\varphi$ up to order $N_p$ are finite and $\widehat\varphi \in C^{N_p}$ near $0$. Fix $\chi \in C_c^\infty(\R^n)$ with $\chi \equiv 1$ on a neighborhood of the origin and set $f := \check\chi$, so that $f \in \mathcal S(\R^n) \subset \hpn$ and $\widehat f \equiv 1$ near $0$, giving $\partial^\nu \widehat f(0) = 0$ for every $\nu \ne 0$ and $\widehat f(0) = 1$. Write $g := f - \varphi * f$, so $\widehat g = (1 - \widehat\varphi)\widehat f$, and by Leibniz:
\[
\partial^\alpha \widehat g(0) = \sum_{\beta \le \alpha} \binom\alpha\beta \, \partial^\beta(1 - \widehat\varphi)(0) \, \partial^{\alpha - \beta} \widehat f(0) .
\]
Every term with $\beta \ne \alpha$ carries the factor $\partial^{\alpha-\beta}\widehat f(0)$ with $\alpha - \beta \ne 0$, hence vanishes regardless of $\varphi$, leaving the single surviving term at $\beta = \alpha$:
\[
\partial^\alpha \widehat g(0) = -\partial^\alpha \widehat\varphi(0) \ \text{ if } 1 \le \abs\alpha \le N_p,
\qquad\qquad
\widehat g(0) = 1 - \widehat\varphi(0) \ \text{ if } \alpha = 0 ,
\]
and this quantity is non-zero by hypothesis. Up to the non-zero constant $(-2\pi i)^{\abs\alpha}$, it equals $\int x^\alpha g(x) \, dx$, so $g$ fails a moment vanishing condition of order $\abs\alpha \le N_p$, and every $\Hpn$ element satisfies exactly these vanishing conditions, so $g \notin \Hpn$. This concludes the argument.
\end{proof}

\begin{theorem}
\label{thm:admiss-characterization} Let $0 < p \le 1$, let $\omega$ be a modulus and $\eta$ a majorant with $\int_1^\infty \eta(s)\, s^{N_p + n - 1}\, ds < \infty$, the moment integrals of $\operatorname{Cond}_G$ being otherwise divergent for every candidate comparable to $\eta$. Then Goldberg's splitting holds for every $\varphi \in C^{\omega}_{\eta}(\R^n) \cap \operatorname{Cond}_G$ (for $p = 1$, every $\varphi \in L^\infty_{\eta}(\R^n) \cap \operatorname{Cond}_G$) if and only if $\omega \in \mathcal R_p$ and $\eta \in \mathcal D_p$.
\end{theorem}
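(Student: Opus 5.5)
The proof splits into sufficiency, assembled from Lemmas~\ref{lem:conv-atom}, \ref{lem:termwise-convolution}, \ref{lem:goldberg-extension} and Theorem~\ref{lem:eta-molecule}, and necessity, read off from Propositions~\ref{prop:linftyc-fails}--\ref{prop:cond-g-sharp}.

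\emph{Sufficiency.} Assume $\omega \in \mathcal R_p$ and $\eta \in \mathcal D_p$. Then $C^\omega_\eta \subseteq C^{\delta_p}_\eta$ by the collapse following~\eqref{eq:admiss-p}, so every $\varphi \in C^\omega_\eta \cap \operatorname{Cond}_G$ lies in $\operatorname{Admiss}_p$.

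For the forward direction, take $f \in \hpn$ with atomic decomposition $f = \sum_i \mu_i a_i$. Lemma~\ref{lem:termwise-convolution} defines $\varphi * f = \sum_i \mu_i\, \varphi * a_i$ unambiguously, and Lemma~\ref{lem:conv-atom}(iii) together with $p$-subadditivity gives $\norm{\varphi * f}_p^p \le C \sum_i \abs{\mu_i}^p$. For the $\Hp$-clause, split the atoms into large and small:
\[
f - \varphi * f = \sum_{r_i \ge 1} \mu_i (a_i - \varphi * a_i) + \sum_{r_i < 1} \mu_i\, a_i - \sum_{r_i < 1} \mu_i\, \varphi * a_i .
\]
The small atoms carry moments to order $N_p$ and are therefore uniformly bounded $\Hp$-atoms. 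By Lemma~\ref{lem:goldberg-extension}, the $A_\varphi$ and $a_\varphi$ are uniform multiples of $(p,q,\eta_{1/2})$-molecules, with clause (iii) void since the radii are at least $1$. Theorem~\ref{lem:eta-molecule} then bounds each of them uniformly in $\Hp$. Summing, $\norm{f - \varphi * f}_{\Hp}^p \lesssim \sum_i \abs{\mu_i}^p$. The series converges in $\St$, so the limit is indeed $f - \varphi * f$.

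For the converse, $f = (f - \varphi * f) + \varphi * f$. Here $\Hp \hookrightarrow \hp$. The remaining task is to put $\varphi * f \in L^p$ into $\hp$. I would test $\varphi * f$ against a local maximal function; this, together with identifying which side of Goldberg's equivalence is meant, is where I expect the subtle point. The plan is to invoke Goldberg's equivalence for a fixed Schwartz $\varphi_0$ and write
\[
\varphi * f = \varphi_0 * \varphi * f + (\delta - \varphi_0) * \varphi * f .
\]
Concretely, I would show $\varphi_0 * (\varphi * f) \in L^p$ using the $L^\infty \cap L^p$ structure of $\varphi * f$. Failing that, I would interpret ``Goldberg's splitting holds'' as the forward implication plus the trivial converse that $\Hp + (\hp \cap L^p\text{-functions } \varphi * f)$ lands in $\hp$. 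The latter follows because $\varphi * f$ is a sum of the uniformly controlled molecules above, since $\varphi * a_i$ is itself $a_i$ minus a molecule, or is a molecule.

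\emph{Necessity.} Fix $\omega$ and $\eta$ as in the statement, and suppose the splitting holds on $C^\omega_\eta \cap \operatorname{Cond}_G$.

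If $\omega \notin \mathcal R_p$, then $\sup_t \omega(t) t^{-\delta_p} = \infty$. Since $\delta_p = n_p - N_p'$, Proposition~\ref{prop:linftyc-fails} at $k = N_p'$ produces a compactly supported $\varphi \in C^{N_p',\omega}_c \cap \operatorname{Cond}_G$ and an $f \in \hpn$ with $\varphi * f \notin L^p$. Compact support places $\varphi$ in $C^\omega_\eta$ for every $\eta$: on $\supp\varphi$ one has $\eta \ge \eta(R) > 0$, and the increments over $\abs h \le 1$ stay within the $1$-neighborhood. This contradicts the splitting.

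If $\eta \notin \mathcal D_p$, Proposition~\ref{prop:decay-endpoint-fails} supplies a smooth $\varphi \in C^{\delta_p}_\eta \cap \operatorname{Cond}_G$ and a single atom at which the splitting fails. This $\varphi$ must also lie in $C^\omega_\eta$. It is smooth, with derivatives of order $N_p'+1$ bounded by $C\eta(\abs x)$, up to the constant $\eta(\abs x+1)/\eta(\abs x)$. So its top-order increments are $\lesssim \eta\,\abs h \le \eta\, \omega(\abs h)/\omega(1)$, using $\omega(t)/t$ non-increasing.

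The main obstacle is the comparability $\eta(\abs x+1) \lesssim \eta(\abs x)$ in the other direction, namely that the $u$-construction's derivative bound is controlled by $\eta$ at the base point. I would handle this by using $u(t) \le \eta(t)$ and bounding increments at $x$ by $\sup_{[x,x+h]} \lesssim \eta(\abs x - 1)$. If that is not comparable, I would replace $\eta$ by $\eta(\cdot - 1)$, which stays in the same class and keeps the divergence by~\eqref{eq:inherited-divergence}.

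Finally, the cancellation axis is built into $\operatorname{Cond}_G$, and Proposition~\ref{prop:cond-g-sharp} records that it cannot be weakened.
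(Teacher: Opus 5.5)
Your sufficiency argument, and your reading of the necessity off Propositions~\ref{prop:linftyc-fails}--\ref{prop:cond-g-sharp}, follow the paper's proof essentially step for step. Two points need correcting.

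\textbf{The converse.} Drop the paragraph on the converse. It is not part of what is proved, and as written it is circular. In this paper $\varphi * f$ is only \emph{defined} for $f \in \hpn$, through an atomic decomposition (Lemma~\ref{lem:termwise-convolution}). So for non-Schwartz $\varphi$ the reverse implication cannot even be posed for a general tempered $f$. Your fallback, that $\varphi * f$ is a sum of the controlled molecules, presupposes that decomposition, i.e.\ it assumes $f \in \hp$ already. The paper's proof establishes exactly the forward implication with quasi-norm control, $\norm{f - \varphi * f}_{\Hp} + \norm{\varphi * f}_p \lesssim \norm{f}_{\hp}$. The three sharpness propositions break precisely that implication, so nothing further is required. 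Note also that Proposition~\ref{prop:linftyc-fails} is stated for $p < 1$ only; at $p = 1$ the modulus axis is absent, as the paper remarks.

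\textbf{Membership in $C^\omega_\eta$ on the decay axis.} You rightly observe that $\varphi \in C^{\delta_p}_\eta$ does not give $\varphi \in C^\omega_\eta$ for small moduli such as $\omega(t) = t \in \mathcal R_p$. What is needed is $\abs{\partial^\nu \varphi(x+h) - \partial^\nu\varphi(x)} \lesssim \eta(\abs{x})\,\abs{h}$, with the weight at the base point. The comparability $\eta(\abs{x} - 1) \lesssim \eta(\abs{x})$ can fail for a general majorant, and the paper's one-line bound $C\min(1,\abs{h})$ leaves this weight implicit.

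Your fallback, however, runs the wrong way. Weighting by $\eta(\cdot - 1) \ge \eta$ only places $\varphi$ in the \emph{larger} class $C^\omega_{\eta(\cdot - 1)}$. That produces no counterexample inside $C^\omega_\eta \cap \operatorname{Cond}_G$ for the given $\eta$. Either of the following repairs works:
\begin{enumerate}[(a)]
\item Run Proposition~\ref{prop:decay-endpoint-fails} with the \emph{smaller} majorant $\eta(\cdot + 1)$. It still satisfies the moment hypothesis, and by the substitution $s \mapsto s + 1$ it has the same convergence and divergence pattern of the integrals in~\eqref{eq:majorant-conditions}. The derivative bounds at $z \in [x, x+h]$ then read $\eta(\abs{z} + 1) \le \eta(\abs{x})$.
\item Use that $\supp \vartheta \subseteq [a, b] \subset (1, 2)$. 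This gives $u(t) \le \eta(at)$ and $\abs{u^{(k)}(t)} \le C_k\, \eta(at)\, t^{-k}$, hence $\eta(a\abs{z}) \le \eta(\abs{x})$ as soon as $\abs{x} \ge a/(a-1)$. The remaining compact region is harmless because $\eta > 0$ there.
\end{enumerate}
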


\begin{proof}
For the \textup{if} direction, $\omega \in \mathcal R_p$ embeds $C^{\omega}_{\eta}$ into the extremal slice $C^{\delta_p}_{\eta}$, with $\norm{\varphi}_{\delta_p, \eta} \le C_\omega \norm{\varphi}_{\omega, \eta}$ as in Subsection~\ref{ssec:holder}, and $\eta \in \mathcal D_p$ then places $\varphi$ in $\operatorname{Admiss}_{p}$, with membership witnessed by $\eta$. Let $f = \sum_j \lambda_j\, a_j$ be Goldberg's atomic decomposition~\cite{goldberg1979}, with $C_6 := (\sum_j \abs{\lambda_j}^p)^{1/p}$, split by the size of the ball and relabeled as $f = \sum_k \lambda_{1,k} A_k + \sum_j \lambda_{2,j}\, a_j$, with large-ball atoms $A_k$ and small-ball atoms $a_j$ carrying $N_p$ vanishing moments. By Lemma~\ref{lem:termwise-convolution}, the series $\sum_k \lambda_{1,k} (\varphi * A_k) + \sum_j \lambda_{2,j} (\varphi * a_j)$ converges absolutely, uniformly, and in $\mathcal S'(\R^n)$, its sum being by definition $\varphi * f$, whence in $\mathcal S'(\R^n)$:
\[
  f - \varphi * f
  \;=\; \sum_k \lambda_{1,k}\, \bigl( A_k - \varphi * A_k
     \bigr)
  \;+\; \sum_j \lambda_{2,j}\, a_j
  \;-\; \sum_j \lambda_{2,j}\, (\varphi * a_j) .
\]
Each series is an $\ell^p$-combination of uniformly bounded elements of $\Hp$: the corrections and mollifications are, up to uniform constants and with $N_p$ vanishing moments, $(p, q, \eta_{1/2})$-molecules (Lemma~\ref{lem:goldberg-extension}\textup{(i)} and \textup{(ii)}, the latter at the exponent $q$ among its simultaneous conclusions), with $\eta_{1/2} \in \mathcal D_p$, and each $a_j$ is precisely an $\Hp$-atom. By the molecular embedding Theorem~\ref{lem:eta-molecule} at $\eta_{1/2}$, the uniform atomic bound~\eqref{eq:atom-hp-bound} in its $\Hp$ form, the equivalence of the atomic and maximal quasi-norms~\cite{latter1978}, \cite[Chapter~III]{garcia-cuerva-rubio1985}, and $p$-sub-additivity, each series converges in $\Hp$ with quasi-norm controlled by $C_6^p$, so $f - \varphi * f \in \Hp$ with $\norm{f - \varphi * f}_{\Hp} \le C(n, p, q, \eta, \norm{\varphi}_{\delta_p, \eta})\, C_6$. For $\varphi * f \in \Lpn$: by Lemma~\ref{lem:conv-atom}\textup{(iii)} and $p$-sub-additivity applied to the absolutely convergent series of Lemma~\ref{lem:termwise-convolution}, $\norm{\varphi * f}_p^p \le C^p \sum_{k} \abs{\lambda_{1,k}}^p + C^p \sum_j \abs{\lambda_{2,j}}^p \le C^p C_6^p$, with $C = C(n, p, \eta, \norm{\varphi}_{\delta_p, \eta})$ the uniform bound over all $L^q$ $\hp$-atoms.

For \textup{only if}, suppose first $\eta \notin \mathcal D_p$. Proposition~\ref{prop:decay-endpoint-fails} produces $\varphi \in C^\infty \cap C^{\delta_p}_{\eta} \cap \operatorname{Cond}_G$ and a single atom $a$ at which Goldberg's splitting fails, through the clause matching the divergent integral of~\eqref{eq:majorant-conditions}. Moreover $\varphi \in C^{\omega}_{\eta}$ for every modulus, its top order increments being bounded by $C \min(1, \abs{h}) \le C\, \omega(1)^{-1}\, \omega(\abs{h})$ through the monotonicity of $\omega(t) / t$, so Goldberg's splitting fails on the class. Suppose finally $\eta \in \mathcal D_p$ but $\omega \notin \mathcal R_p$. Proposition~\ref{prop:linftyc-fails} at $k = N_p'$ produces $\varphi \in C^{N_p', \omega}_c \cap \operatorname{Cond}_G$, contained in $C^{\omega}_{\eta}$ by compact support, and $f \in \hpn$ with $\varphi * f \notin \Lpn$, so Goldberg's splitting fails on the class. (For $p = 1$, only the decay direction is present.) Independently of $\omega$ and $\eta$, $\operatorname{Cond}_G$ itself is unweakenable: Proposition~\ref{prop:cond-g-sharp} furnishes one Schwartz $f \in \hpn$, the same $f$ for every order $\alpha$ with $0 \le \abs\alpha \le N_p$, at which Goldberg's splitting fails for any $\varphi$ meeting the decay and regularity side of an admissible pair but violating $\operatorname{Cond}_G$ at that order, so no relaxation of the normalization or of any one vanishing moment, exact, approximate, or merely finite, survives at any pair. The maximality clause follows: the admissible pairs are exactly $\mathcal R_p \times \mathcal D_p$ carrying $\operatorname{Cond}_G$ exactly as stated, and the union of their classes is~\eqref{eq:admiss-p} by the collapse onto the extremal slice recorded after~\eqref{eq:admiss-p}, every $C^{\omega}_{\eta}$ with $\omega \in \mathcal R_p$ lying inside $C^{\delta_p}_{\eta}$ with norm control while containing the slice itself.
\end{proof}

\section*{Acknowledgments}
The author wishes to express his deepest gratitude to Dr.~Galia Dafni, his master's thesis supervisor at Concordia University, whose guidance, insight, and encouragement shaped the thesis from which the present work grew, and whose feedback prompted the pursuit of both rigor and originality along the way. This work was partially supported by the CRM, Montréal, Canada.

\end{document}